\documentclass[table]{amsart}
\usepackage[margin=1.1in]{geometry}
\usepackage{amscd,amsmath,amsxtra,amsthm,amssymb,stmaryrd,xr,mathrsfs,mathtools,enumerate,commath, comment, mathtools, orcidlink}
\usepackage{tikz}
\usetikzlibrary{arrows.meta,positioning}

\usetikzlibrary{calc}
\usepackage{tikz-3dplot}
\usepackage{stmaryrd}
\usepackage{multirow}
\usepackage{xcolor}
\usepackage{commath}
\usepackage{comment}
\usepackage{svg}
\usepackage{graphics}
\usepackage{longtable} 
\usepackage{pdflscape} 
\usepackage{booktabs}
\usepackage{hyperref}
\definecolor{vegasgold}{rgb}{0.77, 0.7, 0.35}
\definecolor{darkgoldenrod}{rgb}{0.72, 0.53, 0.04}
\definecolor{gold(metallic)}{rgb}{0.83, 0.69, 0.22}
\hypersetup{
 colorlinks=true,
 linkcolor=darkgoldenrod,
 filecolor=brown,      
 urlcolor=gold(metallic),
 citecolor=darkgoldenrod,
 }
\newtheorem{lthm}{Theorem}

\usepackage[all,cmtip]{xy}

\DeclareFontFamily{U}{wncy}{}
\DeclareFontShape{U}{wncy}{m}{n}{<->wncyr10}{}
\DeclareSymbolFont{mcy}{U}{wncy}{m}{n}
\DeclareMathSymbol{\Sh}{\mathord}{mcy}{"58}
\usepackage[T2A,T1]{fontenc}
\usepackage[OT2,T1]{fontenc}

\newtheorem{theorem}{Theorem}[section]
\newtheorem{lemma}[theorem]{Lemma}

\newtheorem*{theorem*}{Theorem}
\newtheorem*{ass*}{Assumption}
\newtheorem{definition}[theorem]{Definition}
\newtheorem{corollary}[theorem]{Corollary}

\newtheorem{proposition}[theorem]{Proposition}

\newcommand{\tr}{\operatorname{tr}}

\newcommand{\Z}{\mathbb{Z}}
\newcommand{\Q}{\mathbb{Q}}
\newcommand{\F}{\mathbb{F}}

\newcommand{\cC}{\mathcal{C}}
\newcommand{\cO}{\mathcal{O}}

\newcommand{\op}[1]{\operatorname{#1}}
\newcommand\mtx[4] { \left( {\begin{array}{cc}
 #1 & #2 \\
 #3 & #4 \\
 \end{array} } \right)}

 \DeclareMathSymbol{\sha}{\mathord}{mcy}{"58}
 \makeatletter
\newcommand{\mylabel}[2]{#2\def\@currentlabel{#2}\label{#1}}
\makeatother

\numberwithin{equation}{section}

\title{Distribution questions for Isogeny Graphs over Finite Fields}
\author[A.~Ray]{Anwesh Ray\, \orcidlink{0000-0001-6946-1559}}
\address[Ray]{Chennai Mathematical Institute, H1, SIPCOT IT Park, Kelambakkam, Siruseri, Tamil Nadu 603103, India}
\email{anwesh@cmi.ac.in}

\begin{document}

\maketitle

\begin{abstract}
In the first part of the paper, we fix a non-CM elliptic curve $E/\Q$ and an odd prime $\ell$ and investigate the distribution of invariants associated to the $\ell$–volcano containing the reduction $E_p$, as $p$ ranges over primes of good ordinary reduction. Let $H(p)$ be the height of the volcano and $d'(p)$ denote the relative position of $j(E_p)$ above the floor and let $r\geq 0$ be an integer. Assuming that the $\ell$–adic Galois representation attached to $E$ is surjective, we derive an explicit formula for the natural density of primes $p$ for which $H(p)=r$ (resp. $d'(p)=r$). In the non-surjective case, we show that all sufficiently large heights occur with positive density. In the second part of the paper, we analyze the distribution of $\ell$–volcano heights over a finite field $\F_q$ and consider the limit as $q\rightarrow \infty$. Using analytic estimates for sums of Hurwitz class numbers in arithmetic progressions, we compute exact limiting densities for ordinary elliptic curves whose $\ell$–isogeny graph has a prescribed height $r$.
\end{abstract}

\section{Introduction}

\subsection{Motivation}

Let $k=\F_q$ be a finite field of characteristic $p$, and let $\ell$ be a prime number (not necessarily distinct from $p$ unless otherwise specified). The $\ell$–isogeny multigraph $\mathcal{G}_\ell(k)$ is defined as follows. Its vertex set consists of the $j$–invariants of elliptic curves defined over $\F_q$, taken up to $\F_q$–isomorphism. Given two vertices $j(E_1)$ and $j(E_2)$, we draw an edge between them for each isogeny $\varphi\colon E_1\to E_2$ of degree $\ell$. In particular, multiple edges may occur, reflecting the fact that there may exist several distinct $\ell$–isogenies between a fixed pair of isomorphism classes. Every isogeny $\varphi\colon E_1\to E_2$ admits a dual isogeny $\widehat{\varphi}\colon E_2\to E_1$ of the same degree, and the compositions $\widehat{\varphi}\circ\varphi$ and $\varphi\circ\widehat{\varphi}$ are equal to multiplication by $\ell$ on $E_1$ and $E_2$, respectively. As a consequence, the multigraph $\mathcal{G}_\ell(k)$ is naturally undirected. The structure of $\mathcal{G}_\ell(k)$ reflects arithmetic properties of elliptic curves over finite fields, including the behavior of endomorphism rings, the splitting of primes in imaginary quadratic fields. From a graph-theoretic perspective, the connected components of $\mathcal{G}_\ell(k)$ exhibit striking and rigid patterns. In the ordinary case, these components are \emph{$\ell$–volcanoes}, while in the supersingular case they form highly connected Ramanujan graphs. The systematic study of these graphs and their structural properties was initiated by Kohel \cite{kohel}.

\par Beyond their intrinsic arithmetic interest, $\ell$-isogeny graphs have attracted significant attention in recent years due to their role in post-quantum cryptography. Cryptographic protocols based on the presumed hardness of finding isogenies between elliptic curves exploit the combinatorial complexity and expansion properties of these graphs, particularly in the supersingular setting. Charles, Goren and Lauter \cite{CGL} introduced a Hash function associated to an expander graph, and explored cryptographic applications. Some examples of cryptosystems include the SQISign cryptosystem \cite{crypto1} and SCALLOP \cite{crypto2}. This has further motivated a detailed investigation of the combinatorial structure of supersingular isogeny graphs, see for instance \cite{colokohel, arpinetal, orvis, cyclesandcuts}.
\par When restricted to ordinary elliptic curves, the stratified nature of volcanoes reflects the variation of endomorphism rings under $\ell$–isogenies and is governed by the $\ell$–adic valuation of discriminants of associated quadratic orders. The height of a volcano measures how far the order generated by Frobenius is from being maximal, while the depth of a given vertex records the valuation of the conductor of the corresponding endomorphism ring. A natural problem is to understand how these invariants vary statistically, either when one fixes a finite field and ranges over elliptic curves, or when one fixes an elliptic curve over $\Q$ and studies its reductions modulo primes. Such questions link the geometry of isogeny graphs to classical problems in analytic number theory, including the distribution of traces of Frobenius and the arithmetic of imaginary quadratic orders. In the finite field setting, counts of elliptic curves with prescribed volcano height are governed by Hurwitz class numbers, while in the global setting they may be interpreted through Galois representations and the Chebotarev density theorem.

\subsection{Main results}

In the first part of the paper, we fix a non-CM elliptic curve $E/\Q$ and an odd prime $\ell$. By a well known result of Serre, the set of primes $p$ at which $E$ has good ordinary reduction has density $1$. We study the variation of $\ell$–volcanoes associated to the reductions $E_p$ as $p$ ranges over primes of good ordinary reduction. Writing $H(p)$ for the height of the $\ell$–volcano containing $j(E_p)$ and $d(p)$ for the depth of $j(E_p)$ within that volcano, we express these quantities in terms of the $\ell$–adic valuation of the discriminant $a_p^2-4p$. We also set $d'(p):=H(p)-d(p)$. Interpreting the conditions $H(p)=r$ and related constraints on $d(p)$ as congruence conditions on Frobenius elements in $\op{GL}_2(\Z_\ell)$, we apply Chebotarev’s density theorem to obtain precise densities for primes with prescribed volcano invariants. In particular, assuming surjectivity of the $\ell$–adic Galois representation, we obtain exact formulas for the density of primes $p$ with $H(p)=r$, as well as for finer invariants measuring the distance from the floor of the volcano.

\begin{lthm}[Theorem \ref{section 3 thm 1}]\label{thma}
     Let $\ell$ be an odd prime such that $\ell\neq \op{char}k$ and assume that $E$ is a non-CM elliptic curve for which the $\ell$-adic Galois representation is surjective. Then for $r>0$, the density of primes $p$ at which $E$ has good ordinary reduction and $H(p)=r$ is precisely 
  \[\begin{split}&2\ell^{-2r}(1-\ell^{-2})
+
\frac{\ell^{-2r}}{\ell^{2}-1}
\Big(
2+2\ell-2\ell^{-2}-2\ell^{-1}
-(2r+2)\ell^{1-r}
+(2r+4)\ell^{-r-2}
\Big)\\
=& 2\ell^{-2r}+O_r(\ell^{-2r-1}),\end{split}\]where the implied constant in $O_r$ depends on $r$ but not on $\ell$. On the other hand, if $r=0$, the density equals 
\[
1-2\ell^{-2}-\frac{2\ell^{-2}}{\ell+1}.
\]
\end{lthm}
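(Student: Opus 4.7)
The plan is to translate the event $\{H(p)=r\}$ into a congruence condition on Frobenius elements, apply the Chebotarev density theorem to replace the global density by a Haar integral on $\op{GL}_2(\Z_\ell)$, and then evaluate this integral by an explicit local $\ell$-adic count.

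First, I would recast the volcano-height condition in terms of Frobenius data. By the Kohel description of ordinary $\ell$-volcanoes, for $E_p/\F_p$ ordinary with Frobenius $\pi_p$, the volcano height equals $v_\ell(f_{\pi_p})$, where $f_{\pi_p}$ is the conductor of $\Z[\pi_p]$ in the ring of integers $\cO_{K_p}$ of $K_p=\Q(\pi_p)$. Combining $\op{disc}(\Z[\pi_p]) = a_p^2-4p = f_{\pi_p}^2\cdot\op{disc}(\cO_{K_p})$ with the fact that an odd prime appears to order $0$ or $1$ in any fundamental discriminant, I obtain $H(p)=r$ if and only if $v_\ell(a_p^2-4p)\in\{2r,2r+1\}$. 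Under the surjectivity hypothesis, $\rho_\ell(\op{Frob}_p)\in\op{GL}_2(\Z_\ell)$ has trace $a_p$ and determinant $p$ for $p$ of good reduction with $p\neq \ell$, so Chebotarev identifies the desired density with the Haar measure of
\[
S_r \;=\; \{M\in \op{GL}_2(\Z_\ell): v_\ell(\tr(M)^2-4\det(M))\in\{2r,2r+1\}\}.
\]
Failure of ordinariness occurs only on a density-zero set by Serre, and only finitely many primes ramify in $\rho_\ell$, so these defects do not affect the density.

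The remainder of the argument is purely local. Since $S_r$ is cut out by a condition modulo $\ell^{2r+2}$, its Haar measure equals the density of matrices in $\op{GL}_2(\Z/\ell^{2r+2})$ satisfying the same congruence. I would factor this count through the pair $(t,d)=(\tr M,\det M)$ and write
\[
\mu(S_r) = \frac{1}{|\op{GL}_2(\Z/\ell^{2r+2})|}\sum_{\substack{(t,d)\in \Z/\ell^{2r+2}\times (\Z/\ell^{2r+2})^\times\\ v_\ell(t^2-4d)\in\{2r,2r+1\}}} N(t,d),
\]
where $N(t,d)$ is the number of matrices in $\op{GL}_2(\Z/\ell^{2r+2})$ with the prescribed trace and determinant. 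Orbit-stabilizer applied to the conjugation action expresses $N(t,d)$ via the unit group of the quadratic $\Z_\ell$-algebra $\Z_\ell[X]/(X^2-tX+d)$, and the value depends on whether this algebra is split, inert, or ramified, together with the depth of degeneration modulo $\ell$. Parameterizing $\Delta = t^2-4d = \ell^k u$ with $u\in\Z_\ell^\times$ and $k\in\{2r,2r+1\}$, enumerating the admissible $t$ for each $\Delta$, and further splitting by the quadratic-residue class of $u$ in the case $k=2r$, the sum decouples into geometric progressions in $\ell^{-1}$ that reassemble into the stated closed form.

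The principal obstacle is the count $N(t,d)$ in the ramified stratum. When the characteristic polynomial has a double root modulo $\ell$ of depth growing with $k$, the orbit structure exhibits a recursion on Jordan filtrations inside $\op{GL}_2(\Z/\ell^{2r+2})$, and it is exactly this recursion that generates the $r$-dependent correction terms $-(2r+2)\ell^{1-r}+(2r+4)\ell^{-r-2}$ in the stated formula; the leading summand $2\ell^{-2r}(1-\ell^{-2})$ in contrast records the generic non-degenerate contribution. Once $N(t,d)$ is determined on each stratum the remaining integration is mechanical. Finally, the case $r=0$ can either be handled directly from the strata $v_\ell(\Delta)\in\{0,1\}$ (where no Jordan recursion occurs) or, equivalently and more efficiently, deduced as the complement $1-\sum_{r\geq 1}\mu(S_r)$, whose geometric series evaluates to $1-2\ell^{-2}-2\ell^{-2}/(\ell+1)$.
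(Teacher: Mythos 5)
Your global strategy coincides with the paper's: both reduce $H(p)=r$ to the condition $v_\ell(a_p^2-4p)\in\{2r,2r+1\}$ via the discriminant identity for $\Z[\pi_p]$, observe that this is a conjugation-stable condition determined modulo $\ell^{2r+2}$, and invoke Chebotarev together with Serre's density-one result for ordinary primes to convert the density into a matrix count in $\op{GL}_2(\Z/\ell^{n}\Z)$. Where you diverge is in how the local count is organized. The paper (Proposition~\ref{section 3 main prop}) makes the unimodular substitution $x=a-d$, $y=b$, $z=c$, $w=a+d$, so that the condition becomes $v_\ell(x^2+4yz)\ge 2r$ together with $w\in(\Z/\ell^n\Z)^\times$, and then counts solutions of $x^2+4yz\equiv 0\pmod{\ell^{2r}}$ by stratifying on $u=v_\ell(x)$; no conjugacy-class analysis is needed. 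You instead fiber over the characteristic polynomial and propose to evaluate $N(t,d)$ by orbit--stabilizer in terms of the quadratic algebra $\Z_\ell[X]/(X^2-tX+d)$. Note that after the paper's substitution one has $N(t,d)=\#\{(x,y,z):x^2+4yz=t^2-4d\}$, so your sum over $(t,d)$ is literally the paper's count reorganized by fibers of the discriminant; the two routes must agree, but yours is strictly harder to execute.

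The one substantive gap is that you never actually compute $N(t,d)$, and this is where all the content of the theorem lies. On the degenerate stratum, i.e.\ when $v_\ell(t^2-4d)=k\ge 2$, the matrices with characteristic polynomial $X^2-tX+d$ split into several conjugacy classes with distinct centralizers (those congruent to scalars modulo $\ell^j$ for the various $j\le\lfloor k/2\rfloor$), so the naive formula $N(t,d)=|\op{GL}_2(\Z/\ell^n\Z)|\,/\,|(\Z_\ell[X]/(X^2-tX+d))^\times|$ is valid only for the regular classes. You acknowledge this (``recursion on Jordan filtrations'') but then assert, rather than derive, that the resulting sums reassemble into the stated closed form; in particular the attribution of the terms $-(2r+2)\ell^{1-r}+(2r+4)\ell^{-r-2}$ to that recursion is plausible but unverified. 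As it stands the proposal is a correct and workable plan rather than a proof. The remaining points --- the reduction to a mod-$\ell^{2r+2}$ condition, the disposal of supersingular and ramified primes, and the telescoping derivation of the $r=0$ density as $1-2\ell^{-2}-2\ell^{-2}/(\ell+1)$ --- are all correct and match the paper.
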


\begin{lthm}[Theorem \ref{section 3 thm 2}]
    Let $E$ be as in Theorem \ref{thma}. Then for $r>0$, the density of primes $p$ at which $E$ has good ordinary reduction and $d'(p)=r$ is precisely $\frac{\ell^2+\ell+1}{(\ell+1)\ell^{3r+1}}$. The density of primes $p$ for which $d'(p)=0$ equals $\frac{\ell^3-\ell-1}{\ell(\ell-1)(\ell+1)}$.
\end{lthm}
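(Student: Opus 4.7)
The plan is to identify $d'(p)=r$ as an explicit congruence condition on the image of Frobenius in $\op{GL}_2(\Z_\ell)$, apply Chebotarev exactly as in Theorem~\ref{thma}, and then perform a group-theoretic count.

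First I would prove the key structural lemma characterizing $d'(p)$ purely in terms of the matrix of Frobenius. For a prime $p$ of good ordinary reduction at which $\ell$ is unramified in $K_p:=\Q(\pi_p)$, write $F_p\in\op{GL}_2(\Z_\ell)$ for the image of a Frobenius at $p$ under $\rho_{E,\ell}$. I would show
\[
d'(p)\;=\;\max\bigl\{k\geq 0\,:\,F_p\equiv \lambda I\!\!\pmod{\ell^k}\text{ for some }\lambda\in\Z_\ell\bigr\}.
\]
By Tate's theorem for ordinary abelian varieties, $\op{End}(E_p)\otimes\Z_\ell$ is the commutant of $F_p$ in $M_2(\Z_\ell)=\op{End}_{\Z_\ell}(T_\ell E_p)$. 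If $k$ is the maximum above and $\lambda$ is the associated scalar, set $M_p:=(F_p-\lambda I)/\ell^k\in M_2(\Z_\ell)$; maximality forces $M_p$ to be non-scalar modulo $\ell$. A short computation then identifies $\Z_\ell[M_p]$ with the full commutant while $[\Z_\ell[M_p]:\Z_\ell[F_p]]=\ell^k$. Since $\Z_\ell[F_p]=\Z[\pi_p]\otimes\Z_\ell$, this realizes $k$ as $v_\ell([\op{End}(E_p):\Z[\pi_p]])=d'(p)$.

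The condition ``$F_p\bmod\ell^r$ is scalar'' is conjugation-invariant, so surjectivity of $\rho_{E,\ell}$ combined with Chebotarev reduces the density of $\{d'(p)\geq r\}$ to the normalized Haar measure on $\op{GL}_2(\Z_\ell)$ of matrices whose reduction modulo $\ell^r$ is scalar. The invertible scalars in $\op{GL}_2(\Z/\ell^r\Z)$ form a subgroup of order $\phi(\ell^r)=\ell^{r-1}(\ell-1)$, while $|\op{GL}_2(\Z/\ell^r\Z)|=\ell^{4r-3}(\ell-1)^2(\ell+1)$. For $r\geq 1$ this gives the density of primes with $d'(p)\geq r$ equal to $1/(\ell^{3r-2}(\ell-1)(\ell+1))$. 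Taking the difference between consecutive values and using $\ell^3-1=(\ell-1)(\ell^2+\ell+1)$ yields the claimed $(\ell^2+\ell+1)/((\ell+1)\ell^{3r+1})$. The case $r=0$ follows from $1-(\ell(\ell-1)(\ell+1))^{-1}=(\ell^3-\ell-1)/(\ell(\ell-1)(\ell+1))$; as a sanity check, summing these densities over all $r\geq 0$ gives $1$.

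The principal obstacle will be the structural lemma: one must verify that $\Z_\ell[M_p]$ is the \emph{full} commutant rather than a proper sub-order, which amounts to ruling out elements $aI+bM_p$ with $b\notin\Z_\ell$ preserving $T_\ell E_p$. This relies crucially on $M_p$ being non-scalar modulo $\ell$, so that some off-diagonal entry of $M_p$, or the difference of its diagonal entries, is a unit and forces $b\in\Z_\ell$. The generic hypothesis that $\ell$ is unramified in $K_p$ is itself a congruence condition on $F_p$ modulo $\ell$ whose complement has density zero and may thus be discarded. Aligning $v_\ell([\op{End}(E_p):\Z[\pi_p]])$ with the volcano distance-to-floor is then immediate from the dictionary set up in the proof of Theorem~\ref{thma}, and the remaining arithmetic is routine.
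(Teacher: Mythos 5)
Your proposal is correct and arrives at the same Chebotarev-plus-counting skeleton as the paper, but it establishes the key structural lemma by a genuinely different route. The paper characterizes $d'(p)$ via the Duke--Toth theorem: $\rho_{E,n}(\op{Frob}_p)$ is conjugate to an explicit integral matrix whose off-diagonal entries and diagonal difference are all multiples of $b_p=[\cO_p:\Z[\pi_p]]$, from which one reads off directly that the reduction mod $\ell^k$ is scalar precisely when $\ell^k\mid b_p$, i.e.\ when $k\le d'(p)$. You instead invoke Tate's theorem that $\op{End}(E_p)\otimes\Z_\ell$ is the commutant of Frobenius in $\op{End}_{\Z_\ell}(T_\ell E_p)$, write $F_p=\lambda I+\ell^k M_p$ with $M_p$ non-scalar mod $\ell$, and verify by hand that the commutant is $\Z_\ell[M_p]$ with $[\Z_\ell[M_p]:\Z_\ell[F_p]]=\ell^k$; this correctly recovers $k=v_\ell([\cO_p:\Z[\pi_p]])=d'(p)$. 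Your argument is more self-contained and conceptual (no appeal to the explicit Duke--Toth representative), at the cost of the commutant verification; the paper's is shorter given the citation. The subsequent counts agree: you compute the density of $\{d'(p)\ge r\}$ as $\varphi(\ell^r)/|\op{GL}_2(\Z/\ell^r\Z)|$ and difference consecutive values, whereas the paper counts the set $D_r\subseteq\op{GL}_2(\Z/\ell^{r+1}\Z)$ of matrices scalar mod $\ell^r$ but not mod $\ell^{r+1}$ in one step; these are identical computations and both yield $(\ell^2+\ell+1)/((\ell+1)\ell^{3r+1})$ and $(\ell^3-\ell-1)/(\ell(\ell-1)(\ell+1))$.

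One side remark in your last paragraph is false, though harmlessly so: the set of primes $p$ for which $\ell$ ramifies in $K_p$ is the set with $v_\ell(a_p^2-4p)$ odd, which is \emph{not} a condition mod $\ell$ alone and has \emph{positive} density (already $v_\ell(a_p^2-4p)=1$ occurs with positive density by Theorem~\ref{thma}). Fortunately, nothing in your commutant argument actually uses unramifiedness of $\ell$ in $K_p$ --- only that $F_p$ is non-scalar over $\Q_\ell$, which holds since $\pi_p\notin\Z$ for ordinary $E_p$ --- so you should simply delete that hypothesis and the accompanying justification rather than rely on it.
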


Next consider a non-CM elliptic curve $E_{/\Q}$ for which the $\ell$-adic Galois representation is not surjective. Then by Serre's open image theorem, there is a minimal integer $k\geq 1$ such that its image contains $\mathcal{G}^k:=\op{ker}\left(\op{GL}_2(\Z_\ell)\longrightarrow \op{GL}_2(\Z/\ell^k)\right)$. 
\begin{lthm}[Theorem \ref{section 3 thm 3}]
    Let $\ell\neq \op{char}k$ be an odd prime number and assume that $E$ is a non-CM elliptic curve over $\Q$. Let $k\geq 1$ be as above and $r\geq k$. Then, the density of primes $p\neq \ell$ for which $E$ has good ordinary reduction and $H(p)=d'(p)=r$ is positive. 
\end{lthm}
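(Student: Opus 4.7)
The plan is to apply Chebotarev's density theorem to the $\ell$-adic Galois representation $\rho_\ell\colon G_{\Q} \to \op{GL}_2(\Z_\ell)$ attached to $E$, whose image $G$ contains $\mathcal{G}^k$ by hypothesis. Building on the viewpoint developed in the proof of Theorem~\ref{thma}, I would translate the conditions $H(p)=r$ and $d(p)=0$ (equivalently $d'(p)=r$) on the reduction $E_p$ into conjugation-invariant conditions on $\rho_\ell(\op{Frob}_p) \in \op{GL}_2(\Z_\ell)$, each determined by its image modulo a fixed power of $\ell$ depending only on $r$. Concretely, for $\pi \in \op{GL}_2(\Z_\ell)$, the height condition $H(\pi)=r$ amounts to $v_\ell(\tr(\pi)^2 - 4\det(\pi)) \in \{2r, 2r+1\}$, while the crater condition $d(\pi)=0$ is equivalent to the multiplier order $\{x \in K_\ell : x T \subseteq T\}$ of the standard $\pi$-stable lattice $T = \Z_\ell^2$ inside $K_\ell := \Z_\ell[\pi] \otimes \Q_\ell$ coinciding with the maximal order $\cO_K \otimes \Z_\ell$ — the standard identification of $\op{End}(E_p) \otimes \Z_\ell$ in the ordinary case. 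Denote the resulting open, conjugation-invariant subset by $\Omega_r \subseteq \op{GL}_2(\Z_\ell)$; it then suffices to show that $\Omega_r \cap \mathcal{G}^k$ has positive Haar measure.

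The crucial step is the construction of an explicit witness. I would take $A_0 := \mtx{1+\ell^r}{0}{0}{1}$. Since $r \geq k$, one has $A_0 \equiv I \pmod{\ell^k}$, so $A_0 \in \mathcal{G}^k$. A direct computation yields $\tr(A_0)^2 - 4\det(A_0) = \ell^{2r}$, so $v_\ell = 2r$ with unit part $1$ (a square in $\Z_\ell^*$); hence $K_\ell$ is split-unramified at $\ell$, the conductor of $\Z_\ell[A_0]$ in $\cO_K \otimes \Z_\ell = \Z_\ell \times \Z_\ell$ has $\ell$-adic valuation $r$, and $H(A_0) = r$. Because $A_0$ is diagonal with distinct eigenvalues, the standard lattice $T$ is itself the eigenline decomposition, so $\cO_K \otimes \Z_\ell$ acts coordinatewise and preserves $T$; the multiplier is thus the full maximal order, forcing $d(A_0) = 0$. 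To obtain positive measure rather than a single point, I would verify that these properties persist under any perturbation $A_0 + \ell^N B$ with $N > 2r$ and $B \in M_2(\Z_\ell)$: the discriminant valuation is stable by the explicit expansion, and by Hensel's lemma such an $A$ is conjugate to a diagonal matrix via a transformation close to the identity, again realizing the crater structure. Hence $\Omega_r$ contains an $\ell$-adic ball around $A_0$, whose intersection with $\mathcal{G}^k$ has positive Haar measure.

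The conclusion then follows at once from Chebotarev applied to $\rho_\ell$: the density of primes $p$ with $\rho_\ell(\op{Frob}_p)$ in this ball equals the ratio of the Haar measures of the ball and $G$, which is strictly positive. Discarding the density-zero exceptional sets — primes of bad or supersingular reduction, and the prime $p = \ell$ — yields a positive density of primes $p \neq \ell$ of good ordinary reduction satisfying $H(p) = d'(p) = r$. The main obstacle I anticipate is the translation of the depth condition $d(p) = 0$ into an open matrix condition; once the multiplier-order characterization of $\op{End}(E_p) \otimes \Z_\ell$ (standard in the ordinary case) and its $\ell$-adic stability under perturbation are in place, the remainder is a direct application of Chebotarev to the explicit witness $A_0$.
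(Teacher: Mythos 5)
Your proof is correct and follows essentially the same route as the paper: both arguments exhibit an explicit witness matrix congruent to the identity modulo $\ell^{r}$ (hence lying in $\mathcal{G}^k\subseteq \op{image}\,\widehat{\rho}_{E,\ell}$ because $r\ge k$) whose discriminant $\op{tr}^2-4\det$ has $\ell$-adic valuation exactly $2r$, and conclude by Chebotarev after discarding the density-zero set of non-ordinary primes — the paper's witness is $\mtx{1}{\ell^r}{\ell^r}{1}$ considered modulo $\ell^{2r+1}$ in place of your $\op{diag}(1+\ell^r,1)$ and its $\ell$-adic ball. The only substantive difference is in verifying $d(p)=0$: you go through the multiplier order of the Tate module and a Hensel diagonalization of the perturbed matrix, whereas the paper reads it off in one line from the Duke--Toth criterion it has already established (namely that $d'(p)$ equals the largest $j$ for which $\rho_{E,\ell^j}(\op{Frob}_p)$ is scalar, and the witness is scalar mod $\ell^r$ but not mod $\ell^{r+1}$).
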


The second part of this paper studies the distribution of ordinary elliptic curves over finite fields according to the height of their associated $\ell$–volcano. Let $\mathcal{E}(\F_q)$ be the set of isomorphism classes of elliptic curves over $\F_q$. Fixing an odd prime $\ell\neq p$ and an integer $r\ge 0$, we consider the subset $\mathcal{E}(r;\F_q)$ of ordinary elliptic curves over $k$ whose $\ell$–isogeny graph component has height exactly $r$. Using Deuring’s correspondence between isogeny classes and imaginary quadratic orders, together with estimates for sums of Hurwitz class numbers in arithmetic progressions, we compute the limiting density
\[
\mathfrak{d}_r=\lim_{q\to\infty}\frac{\#\mathcal{E}(r;\F_q)}{\#\mathcal{E}(\F_q)}.
\]
\begin{lthm}[Theorem \ref{thm:density}]
We have that
\[
\mathfrak{d}_r
=
\begin{cases}\frac{\ell^{2r}(\ell^2-1)(\ell^{4r+2}+1)}
{(\ell^{4r}-1)(\ell^{4r+4}-1)}&\text{ if }r\geq 1,\\
\left(1-\frac{\ell^2}{\ell^4-1}\right)&\text{ if }r=0.
\end{cases}
\]
\end{lthm}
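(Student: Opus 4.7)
The plan is to combine Deuring's correspondence between ordinary isogeny classes and imaginary quadratic orders with the analytic estimates for sums of Hurwitz class numbers in arithmetic progressions developed in the earlier sections of the paper. By Deuring's theorem, for each integer $a$ with $|a|<2\sqrt q$ and $\gcd(a,p)=1$, the number of $\F_q$-isomorphism classes of ordinary elliptic curves $E/\F_q$ with Frobenius trace $a$ equals the Hurwitz class number $H(4q-a^2)$. Writing $a^2-4q=f_a^2 D_a$ with $D_a$ a negative fundamental discriminant, the height of the $\ell$-volcano containing any such curve depends only on the isogeny class and equals $v_\ell(f_a)$. Hence the count reduces to
\[
\#\mathcal{E}(r;\F_q)\;=\;\sum_{\substack{|a|<2\sqrt q,\;\gcd(a,p)=1\\ v_\ell(f_a)=r}} H(4q-a^2).
\]

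Since $\ell$ is odd, $v_\ell(D_a)\in\{0,1\}$, so the condition $v_\ell(f_a)=r$ is equivalent to $v_\ell(a^2-4q)\in\{2r,\,2r+1\}$. By Hensel's lemma this cuts out finitely many residue classes of $a$ modulo an appropriate power of $\ell$, the enumeration of which depends on whether $4q$ is a square modulo $\ell$ and on the separation of exact valuation $2r$ from $2r+1$. For each admissible residue class $a_0$ modulo $\ell^{N}$ I would invoke the asymptotic
\[
\sum_{\substack{|a|<2\sqrt q\\ a\equiv a_0\,(\ell^N)}} H(4q-a^2)\;=\;w(a_0,N)\,q + O\!\left(q^{1-\delta}\right),
\]
with explicit local weight $w(a_0,N)$, and likewise for the total mass $\#\mathcal{E}(\F_q)=\sum_a H(4q-a^2)+O(\sqrt q)\sim cq$, which follows from the Eichler mass formula together with the classical bound on the supersingular locus. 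Summing over all admissible residue classes and passing to the limit then produces $\mathfrak{d}_r$.

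I expect the principal obstacle to be the analytic input in the second display: obtaining a power-saving error term for $\sum H(4q-a^2)$ restricted to an arithmetic progression modulo $\ell^N$, uniform enough in $a_0$ to survive the ensuing assembly. Once that estimate is in hand, the final computation is essentially combinatorial. For $r\geq 1$, the weighted count over residues with $v_\ell(a^2-4q)\in\{2r,2r+1\}$ produces a geometric-series contribution in $\ell^{-2}$ together with a boundary correction separating the two valuation cases; this is what generates the numerator $\ell^{4r+2}+1$ and the denominator $(\ell^{4r}-1)(\ell^{4r+4}-1)$ of the stated formula. The $r=0$ case follows either by the same scheme applied to the complementary condition $v_\ell(a^2-4q)\in\{0,1\}$, or simply by taking the complement in the closed form.
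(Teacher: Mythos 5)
Your proposal follows essentially the same route as the paper: Deuring's identification $N(t)=H(t^2-4q)$, the translation of height $r$ into the condition $v_\ell(t^2-4q)\in\{2r,2r+1\}$, the evaluation of the resulting sums of Hurwitz class numbers over residue classes of $t$ modulo powers of $\ell$ via the progression estimate (the paper's Theorem~\ref{thm:hurwitz}, quoted from Duke), and normalization by $\#\mathcal{E}(\F_q)=2q+O(1)$. The only cosmetic difference is that the paper realizes the exact-valuation condition as a difference of the two congruence sums modulo $\ell^{2r}$ and $\ell^{2r+2}$ (Corollary~\ref{cor:Er} and Theorem~\ref{thm:main-density}) rather than enumerating exact-valuation residue classes directly, and the analytic input you flag as the principal obstacle is exactly the off-the-shelf lemma the paper cites.
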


\subsection{Organization of the paper}

Including the introduction, the article consists of $4$ sections. Section~\ref{s2} recalls the basic structure of $\ell$–isogeny graphs over finite fields, including the classification of ordinary components as volcanoes and their description in terms of endomorphism rings. In Section~\ref{s3}, we turn to elliptic curves over $\Q$ and analyze the variation of volcano height in families of reductions. After reviewing the relevant properties of adelic and $\ell$–adic Galois representations, we translate conditions on volcano invariants into conditions on Frobenius conjugacy classes. Section~\ref{s4} studies elliptic curves over finite fields and establishes asymptotic formulas for the number of curves whose $\ell$–isogeny graph has a given height.

\subsection{Outlook}
The themes studied in this article could lead to further developments in the supersingular setting as well, where quaternionic methods replace complex multiplication. It would also be interesting to explore extensions to higher-dimensional abelian varieties, cf. \cite{abvar1, abvar2}. Given the growing importance of isogeny graphs in cryptographic applications, a deeper understanding of their statistical properties may have further implications in cryptography.

\subsection*{Data availability} No data was analyzed in proving the results in the article.

\subsection*{Conflict of interest} There is no conflict of interest to report.

\section{Isogeny graphs of elliptic curves}\label{s2}

\par
In this section, we recall the basic structure of isogeny graphs associated to the set of elliptic curves defined over a finite field $k=\F_q$ of characteristic $p$. The absolute Galois group $\op{G}_k=\op{Gal}(\bar{k}/k)$ is topologically generated by the Frobenius automorphism $\phi:x\mapsto x^q$. Let $E/k$ be an elliptic curve with
\[
j(E)=j(a,b)=1728\,\frac{4a^3}{4a^3+27b^2}.
\]
The curve $E$ is said to be \emph{supersingular} if $E(\bar{k})[p]=0$, and \emph{ordinary} otherwise. In the ordinary case, $\op{End}(E)$ is an order in an imaginary quadratic field, whereas in the supersingular case it is a maximal order in a quaternion algebra.

\par
An \emph{isogeny} $\varphi:E_1\to E_2$ is a nonzero morphism of elliptic curves sending $0$ to $0$. It is finite of degree $\deg(\varphi)$ and admits a dual isogeny $\widehat{\varphi}:E_2\to E_1$ satisfying $\widehat{\varphi}\circ\varphi=[n]_{E_1}$ when $\deg(\varphi)=n$. If $\gcd(n,p)=1$, then $\varphi$ is separable with $|\ker(\varphi)|=\deg(\varphi)$. For any prime $\ell\ne p$, one has $E[\ell]\simeq(\Z/\ell\Z)^2$, which contains exactly $\ell+1$ cyclic subgroups of order $\ell$, each corresponding to a separable $\ell$–isogeny. Such an isogeny is defined over $k$ precisely when its kernel is stable under the action of $\op{G}_k$ on $E[\ell]$. This action gives rise to the mod-$\ell$ Galois representation
\[
\rho_{E,\ell}: \op{G}_k\longrightarrow\op{Aut}(E[\ell])\cong\op{GL}_2(\F_\ell).
\]

\begin{lemma}
Let $E/k$ be an elliptic curve with $j(E)\notin\{0,1728\}$, and let $\ell\ne p$ be a prime. Then the number of $k$–rational $\ell$–isogenies with source $E$ is $0$, $1$, $2$, or $(\ell+1)$.
\end{lemma}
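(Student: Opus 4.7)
The plan is to reduce the count of $k$–rational $\ell$–isogenies with source $E$ to a linear-algebraic count of invariant lines for a single matrix, and then to enumerate the possible conjugacy types in $\op{GL}_2(\F_\ell)$.

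First I would use the discussion immediately preceding the lemma: since $\ell\ne p$, every separable $\ell$–isogeny with source $E$ is uniquely determined (up to post-composition by an isomorphism of the target) by its kernel, a cyclic subgroup of order $\ell$ in $E[\ell](\bar k)\cong (\Z/\ell)^2$. Of these $\ell+1$ cyclic subgroups, the $k$–rational ones are precisely those stable under the Galois action, i.e. under the image of the mod-$\ell$ representation $\rho_{E,\ell}\colon\op{G}_k\to\op{GL}(E[\ell])\cong\op{GL}_2(\F_\ell)$. Because $\op{G}_k$ is topologically generated by $\phi$, the $\op{G}_k$–stable lines in $E[\ell]$ are exactly the lines fixed by the single matrix $M:=\rho_{E,\ell}(\phi)\in\op{GL}_2(\F_\ell)$. (Here the hypothesis $j(E)\notin\{0,1728\}$ guarantees $\op{Aut}(E)=\{\pm 1\}$, so that identifying isogenies via their kernels introduces no spurious duplication.)

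Next I would enumerate the conjugacy classes of $M\in\op{GL}_2(\F_\ell)$ according to the factorization of the characteristic polynomial $\chi_M(T)\in\F_\ell[T]$:
\begin{itemize}
\item if $\chi_M$ is irreducible over $\F_\ell$, then $M$ has no eigenvector in $\F_\ell^2$, giving $0$ invariant lines;
\item if $\chi_M$ has a repeated root $\lambda$ and $M\ne \lambda I$, then $M$ is conjugate to a non-trivial Jordan block and has a unique invariant line (the eigenspace), giving $1$;
\item if $\chi_M$ has two distinct roots in $\F_\ell$, then $M$ is diagonalizable with two one-dimensional eigenspaces, giving $2$;
\item if $M=\lambda I$ is scalar, then every line is invariant, giving all $\ell+1$.
\end{itemize}
These four cases are exhaustive for $\op{GL}_2(\F_\ell)$, so the number of $k$–rational $\ell$–isogenies with source $E$ lies in $\{0,1,2,\ell+1\}$.

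There is essentially no obstacle here: the only subtlety is the translation between ``$k$–rational isogeny'' and ``$\op{G}_k$–stable kernel,'' which is standard for separable isogenies and is handled in one line once one invokes $\ell\ne p$ and the fact that an $\ell$–isogeny is recovered from its kernel up to composition with an isomorphism. The remaining step is the elementary linear-algebra dichotomy above.
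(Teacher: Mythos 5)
Your proposal is correct and follows essentially the same route as the paper: both reduce the count of $k$–rational $\ell$–isogenies to counting Galois-stable lines in $E[\ell]$ (using $j(E)\notin\{0,1728\}$ to identify isogenies with their kernels), and both then observe that the number of fixed lines is $0$, $1$, $2$, or $\ell+1$. The only cosmetic difference is that the paper's proof phrases the last step via the projective image in $\op{PGL}_2(\F_\ell)$ (a nontrivial element fixes at most two points of $\mathbb{P}^1(\F_\ell)$), whereas you enumerate conjugacy types of the Frobenius matrix — an enumeration the paper itself carries out immediately after the lemma.
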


\begin{proof}
Let $G$ denote the image of the projective representation
\[
\op{G}_k \longrightarrow \op{PGL}_2(\F_\ell).
\]
Since $j(E)\notin\{0,1728\}$, the automorphism group $\op{Aut}(E)$ is $\{\pm 1\}$, and therefore distinct $\ell$–isogenies correspond bijectively to $G$–stable lines in the two–dimensional $\F_\ell$–vector space $E[\ell]$. If $G$ acts trivially, all $\ell+1$ lines in $\mathbb{P}(E[\ell])\simeq \mathbb{P}^1(\F_\ell)$ are fixed. Otherwise, any nontrivial element of $\op{PGL}_2(\F_\ell)$ fixes at most two points of $\mathbb{P}^1(\F_\ell)$, since an element fixing three points must be the identity, and the result follows.
\end{proof}

\par
Since $\op{G}_k$ is procyclic, generated by the Frobenius $\pi(x)=x^q$, the image $\sigma:=\rho_{E,\ell}(\pi)\in\op{GL}_2(\F_\ell)$ determines the $G$–action on $E[\ell]$. A line is fixed by $G$ if and only if it is an eigenspace of $\sigma$. By the Weil pairing, $\det(\rho_{E,\ell})$ is the mod-$\ell$ cyclotomic character, so
\[
\det(\sigma)\equiv q \pmod{\ell}.
\]
The characteristic polynomial of $\sigma$ is
\[
x^2 - t x + q,
\qquad\text{where } t=\operatorname{trace}(\sigma).
\]
By the Hasse–Weil bound, $t\in[-2\sqrt{q},\,2\sqrt{q}]$. Let
\[
\Delta_\pi := t^2-4q \le 0
\]
be the discriminant. According to the structure of the Frobenius action, one obtains exactly the following possibilities:

\begin{itemize}
    \item $\sigma$ is a scalar matrix; in this case all $(\ell+1)$ lines are fixed, giving $(\ell+1)$ $k$–rational $\ell$–isogenies.
    \item $\sigma$ is diagonalizable over $\F_\ell$ with two distinct eigenvalues; in this case, exactly two lines are fixed, giving two $k$–rational $\ell$–isogenies.
    \item $\sigma$ has a single eigenvalue in $\F_\ell$ but is non-diagonalizable; it is then conjugate to a matrix of the form $\begin{psmallmatrix}\lambda & * \\ 0 & \lambda\end{psmallmatrix}$, and exactly one line is fixed.
    \item $\sigma$ has no eigenvalues in $\F_\ell$; in this case no line is fixed, and there are no $k$–rational $\ell$–isogenies.
\end{itemize}
\par Before discussing isogeny graphs, let us briefly recall the notion of a \emph{multigraph}. A multigraph $\Gamma$ consists of a finite set $V$ of \emph{vertices}, a set $E^+$ of \emph{directed edges} and an adjacency map $\alpha: E^+\rightarrow V\times V$ which maps $e\in E^+$ to a pair $(v, v')$ where $e$ is an edge from $v$ to $v'$. The multigraph is said to be \emph{undirected} if for each $e\in E^+$, there is an inverse $\bar{e}\in E^+$ such that $\alpha(\bar{e})=(v',v)$. In other words, inversion $\iota(e):=\bar{e}$ is a bijection $\iota: E^+\rightarrow E^+$ such that:
\begin{itemize}
    \item $\tau^2=\op{Id}$,
    \item $\alpha\circ \iota=\mu\circ \alpha$, where $\mu(v,v'):=(v',v)$.
\end{itemize} For an undirected multigraph, set $E:=E^+/\sim$ where $e\sim e'$ if $e'=e$ or $e'=\bar{e}$. The set $E$ are the edges of $\Gamma$. Note that an undirected multigraph allows for self-loops and multiple edges between two vertices. For the rest of this article, by a \emph{graph}, we simply mean an undirected multigraph. Two vertices $v,w \in V$ are said to be \emph{adjacent} (or \emph{neighbours}) if $\{v,w\}\in E$. The \emph{degree} of a vertex $v$, denoted $\deg(v)$, is the number of edges incident to $v$; equivalently, it is the number of vertices adjacent to $v$. A graph is \emph{$k$–regular} if every vertex has degree $k$. A subset $W \subseteq V$ determines an \emph{induced subgraph} of $\Gamma$, consisting of the vertices in $W$ and all edges between them. A graph is \emph{connected} if every pair of vertices lies in a common path, that is, a sequence of edges joining them.

\par
We consider the graph $\mathcal{G}_\ell(k)$ whose vertices are the $j$-invariants of elliptic curves defined over $k$. Given $j_1=j(E_1)$ and $j_2=j(E_2)$, we draw an edge between $j_1$ and $j_2$ if and only if there exists an $\ell$-cyclic isogeny $E_1\!\to\!E_2$. Since every isogeny admits a dual isogeny, the relation is symmetric, and therefore $\mathcal{G}_\ell(k)$ is an undirected multigraph. There is possibly more than one undirected edge between any two vertices, as well as self-loops.
\par The multigraph $\mathcal{G}_\ell(k)$ decomposes as a disjoint union of its connected components. If $E_1$ is ordinary (resp.\ supersingular) and there exists an isogeny $E_1\to E_2$, then $E_2$ is also ordinary (resp.\ supersingular). Hence each connected component of $\mathcal{G}_\ell(k)$ consists entirely of ordinary elliptic curves or entirely of supersingular elliptic curves. Accordingly, we refer to a connected component of $\mathcal{G}_\ell(k)$ as \emph{ordinary} or \emph{supersingular} depending on the nature of the curves it contains. Ordinary components of $\mathcal{G}_\ell(k)$ are instances of graphs known as \emph{volcanoes}. Their structure reflects the way endomorphism rings of elliptic curves change under $\ell$-isogenies. The component is stratified into levels, where the level of a vertex records the index of the endomorphism ring of the corresponding curve inside the endomorphism ring of an elliptic curve at the \emph{crater} of the volcano.

\begin{definition}
Let $\ell$ be a prime number. An \emph{$\ell$-volcano} $\mathcal{G}=(V,E)$ of height $H$ is a connected undirected graph whose vertex set $V$ is partitioned into disjoint levels
\[
V \;=\; \bigsqcup_{i=0}^H V_i,
\]
satisfying the following properties:
\begin{enumerate}
    \item The subgraph induced by $V_0$ (the \emph{crater}) is regular of degree at most $2$. Thus the crater is either a cycle, a pair of vertices joined by a double edge, a single edge, or a single vertex.
    \item For each $i>0$ and every vertex $v\in V_i$, there is a unique neighbour of $v$ lying in $V_{i-1}$.
    \item For each $i<H$, every vertex in $V_i$ has total degree $(\ell+1)$.
\end{enumerate}
We refer to $V_0$ as the \emph{crater}, to $V_H$ as the \emph{floor}. The quantity $H=H(\mathcal{G})$ is a measure of the overall complexity of the volcano. 
\end{definition}
\noindent For instance, here's a $2$-volcano with $H=3$:
\begin{center}
\begin{tikzpicture}[
  vertex/.style={circle, draw, fill=black, inner sep=1.2pt},
  scale=1.1
]

\node[vertex] (c1) at (-1,3) {};
\node[vertex] (c2) at (2,3) {};
\draw (c1) -- (c2);

\node[vertex] (d11) at (-2.5,2) {};
\node[vertex] (d12) at (-0.4,2) {};
\node[vertex] (d13) at (1.4,2) {};
\node[vertex] (d14) at (3.5,2) {};

\draw (c1) -- (d11);
\draw (c1) -- (d12);
\draw (c2) -- (d13);
\draw (c2) -- (d14);

\node[vertex] (d21) at (-3,1) {};
\node[vertex] (d22) at (-2,1) {};
\node[vertex] (d23) at (-1,1) {};
\node[vertex] (d24) at (0,1) {};
\node[vertex] (d25) at (1,1) {};
\node[vertex] (d26) at (2,1) {};
\node[vertex] (d27) at (3,1) {};
\node[vertex] (d28) at (4,1) {};

\draw (d11) -- (d21);
\draw (d11) -- (d22);
\draw (d12) -- (d23);
\draw (d12) -- (d24);
\draw (d13) -- (d25);
\draw (d13) -- (d26);
\draw (d14) -- (d27);
\draw (d14) -- (d28);

\node[vertex] (d31) at (-3.25,0) {};
\node[vertex] (d32) at (-2.75,0) {};
\node[vertex] (d33) at (-2.25,0) {};
\node[vertex] (d34) at (-1.75,0) {};
\node[vertex] (d35) at (-1.25,0) {};
\node[vertex] (d36) at (-0.75,0) {};
\node[vertex] (d37) at (-0.25,0) {};
\node[vertex] (d38) at (0.25,0) {};
\node[vertex] (d39) at (0.75,0) {};
\node[vertex] (d310) at (1.25,0) {};
\node[vertex] (d311) at (1.75,0) {};
\node[vertex] (d312) at (2.25,0) {};
\node[vertex] (d313) at (2.75,0) {};
\node[vertex] (d314) at (3.25,0) {};
\node[vertex] (d315) at (3.75,0) {};
\node[vertex] (d316) at (4.25,0) {};

\draw (d21) -- (d31);
\draw (d21) -- (d32);
\draw (d22) -- (d33);
\draw (d22) -- (d34);
\draw (d23) -- (d35);
\draw (d23) -- (d36);
\draw (d24) -- (d37);
\draw (d24) -- (d38);
\draw (d25) -- (d39);
\draw (d25) -- (d310);
\draw (d26) -- (d311);
\draw (d26) -- (d312);
\draw (d27) -- (d313);
\draw (d27) -- (d314);
\draw (d28) -- (d315);
\draw (d28) -- (d316);

\node[draw=none] at (5.1,3) {depth $0$};
\node[draw=none] at (5.1,2) {depth $1$};
\node[draw=none] at (5.1,1) {depth $2$};
\node[draw=none] at (5.1,0) {depth $3$};

\end{tikzpicture}
\end{center}

\noindent It is possible for there to be self-loops along the crater, but nowhere else in the volcano.
\par Let $E_{/k}$ be an ordinary elliptic curve and $\ell\neq p$ be a prime. The endomorphism ring $\cO_E:=\op{End}(E)$ is an order in the imaginary quadratic field $K:=\Q(\sqrt{\Delta_\pi})$. Such an order is contained in the ring of integers $\cO_K$ and is in particular, of the form
\[\cO_E=\Z+f_E\cO_K\] where $f_E\geq 1$ is an integer known as the \emph{conductor} of $\cO_E$. One has the inclusions 
\[\Z[\pi]\subseteq \cO_E\subseteq \cO_K. \]

\begin{proposition}[Kohel]
    Let $E$ and $E'$ be elliptic curves over $k$ and suppose that there is an $\ell$-isogeny $\varphi: E\rightarrow E'$, then one of the following holds:
    \begin{enumerate}
        \item $\cO_E=\cO_{E'}$, 
        \item $[\cO_E:\cO_{E'}]=\ell$,
        \item $[\cO_{E'}:\cO_E]=\ell$.
    \end{enumerate}
\end{proposition}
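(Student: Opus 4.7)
The plan is to leverage the relations between $\varphi$, its dual $\hat\varphi$, and $[\ell]$ to identify both $\cO_E$ and $\cO_{E'}$ as orders inside a common imaginary quadratic field $K$, and then to derive mutual containments $\ell\cO_E\subseteq\cO_{E'}$ and $\ell\cO_{E'}\subseteq\cO_E$. From the standard classification of orders in $K$ by conductor, these two containments immediately yield the three cases.

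First I would set up the identification $\op{End}(E)\otimes\Q\cong\op{End}(E')\otimes\Q\cong K$. For $\alpha\in\op{End}(E)\otimes\Q$, define $\psi(\alpha):=\tfrac{1}{\ell}\,\varphi\circ\alpha\circ\hat\varphi\in\op{End}(E')\otimes\Q$. Using $\hat\varphi\circ\varphi=[\ell]_E$ one checks directly that $\psi(\alpha_1)\psi(\alpha_2)=\tfrac{1}{\ell^2}\varphi\alpha_1(\hat\varphi\varphi)\alpha_2\hat\varphi=\tfrac{1}{\ell}\varphi\alpha_1\alpha_2\hat\varphi=\psi(\alpha_1\alpha_2)$, so $\psi$ is a ring homomorphism, and the analogous map induced by $\hat\varphi$ is its inverse. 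Via $\psi$, both $\cO_E$ and $\cO_{E'}$ are realised as orders inside the same field $K$.

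Next I would observe that for any $\alpha\in\cO_E$, the composition $\varphi\circ\alpha\circ\hat\varphi$ is an honest endomorphism of $E'$, so $\ell\,\psi(\alpha)\in\cO_{E'}$, giving $\ell\cO_E\subseteq\cO_{E'}$. Symmetrically, $\ell\cO_{E'}\subseteq\cO_E$. Writing $\cO_E=\Z+f_E\cO_K$ and $\cO_{E'}=\Z+f_{E'}\cO_K$, and choosing a $\Z$-basis $\{1,\omega\}$ of $\cO_K$, the containment $\ell f_{E'}\omega\in\ell\cO_{E'}\subseteq\cO_E$ forces $f_E\mid\ell f_{E'}$; symmetrically $f_{E'}\mid\ell f_E$. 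Hence $v_q(f_E)=v_q(f_{E'})$ for every prime $q\neq\ell$, and $|v_\ell(f_E)-v_\ell(f_{E'})|\leq 1$. Thus $f_{E'}/f_E\in\{1,\ell,\ell^{-1}\}$, yielding the three cases $\cO_E=\cO_{E'}$, $\cO_{E'}\subsetneq\cO_E$ of index $\ell$, and $\cO_E\subsetneq\cO_{E'}$ of index $\ell$, respectively.

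The main obstacle is really just the bookkeeping of the identification $\psi$ and the verification that it is a ring isomorphism intertwining the two subrings of $K$; this is where care is required, but it reduces to the one-line calculation above using $\hat\varphi\varphi=[\ell]$. Once that is in place, the remaining conductor analysis is elementary arithmetic of orders in an imaginary quadratic field.
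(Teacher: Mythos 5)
Your argument is correct. The paper itself gives no proof here --- it simply cites Kohel's thesis (Proposition~21) --- and what you have written is precisely the standard argument underlying that reference: transport $\op{End}(E)\otimes\Q$ to $\op{End}(E')\otimes\Q$ via $\alpha\mapsto\tfrac{1}{\ell}\varphi\alpha\hat\varphi$, deduce the mutual containments $\ell\cO_E\subseteq\cO_{E'}$ and $\ell\cO_{E'}\subseteq\cO_E$, and conclude from $f_E\mid \ell f_{E'}$ and $f_{E'}\mid \ell f_E$ that the conductors agree away from $\ell$ and differ by at most one power of $\ell$. The only point worth flagging is that your conductor bookkeeping tacitly assumes the curves are ordinary, so that both endomorphism rings are orders $\Z+f\cO_K$ in a common imaginary quadratic field; this is the setting the paper intends (it is the hypothesis in force in the surrounding discussion), but it is not stated in the proposition itself and should be made explicit, since for supersingular curves the statement requires a separate treatment.
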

\begin{proof}
    The result follows from \cite[Proposition~21, p.~44]{kohel}. 
\end{proof}
\noindent An $\ell$-isogeny $\varphi: E\rightarrow E'$ is said to be \emph{horizontal} if $\cO_{E}=\cO_{E'}$. If $[\cO_E:\cO_{E'}]=\ell$ (resp. $[\cO_{E'}:\cO_{E}]=\ell$) then $\varphi$ is said to be \emph{descending} (resp. \emph{ascending}). Given an order $\cO$, let $\op{Ell}_{\cO}(k)$ be the set of isomorphism classes $j$-invariants $j(E)$ for elliptic curves $E_{/k}$ with $\op{End}_{\cO}(k)\simeq \cO$. Given an elliptic curve $E$ with endomorphism ring $\cO$ and a non-zero ideal $\mathfrak{a}$ of $\cO$, we have an isogeny $\varphi_{\mathfrak{a}}:E\rightarrow E'$. Assume that the norm $N(\mathfrak{a})=[\cO:\mathfrak{a}]$ is prime to $p$, then the degree of $\varphi_{\mathfrak{a}}$ equals $N(\mathfrak{a})$. Assume that $\op{Ell}_{\cO}(k)\neq \emptyset$, then the set of elliptic curves $\op{Ell}_{\cO}(k)$ inherits a simply transitive action of the class group $\op{Cl}(\cO)$ of $\cO$. In particular, the cardinality of $\op{Ell}_{\cO}(k)$, when non-zero, is equal to the class number $h(\cO):=\# \op{Cl}(\cO)$. Consider elliptic curves in an ordinary component. An elliptic curve $E$ over $k$ is at depth $d$ if $v_\ell([\cO_K:\cO_E])=d$. If the depth a curve is $0$ then it is at the surface. The height of the volcano is $v_\ell\left([\cO_K: \Z[\pi]]\right)$ where $D_K$ is the discriminant of $K$.
\par Suppose that $\bar{k}=\bar{\F}_q$ be an algebraically closed field and $E_{/\bar{k}}$ be an elliptic curve with
$\operatorname{End}_{\bar{k}}(E)\;\cong\;\mathcal{O}=\mathbb{Z} + f\mathcal{O}_{K}$,
an order of conductor $f$ in the imaginary quadratic field $K$. Let $\ell\neq \operatorname{char}(\bar{k})$ be a prime. In what follows, we refer to $\ell$-isogenies over $\bar{k}$:
\begin{enumerate}
    \item If $\ell\mid f$, then $j(E)$ is not on the crater, and there are no horizontal $\ell$-isogenies. There is a unique ascending $\ell$-isogeny from $j(E)$ and a total of $\ell$ descending isogenies.
    \item If $\ell\nmid f$, then the number of horizontal $\ell$-isogenies equals:
    \[
    \begin{cases}
    0 & \text{if $\ell$ is inert in $K$},\\
    1 & \text{if $\ell$ is ramified in $K$},\\
    2 & \text{if $\ell$ splits in $K$},
    \end{cases}
    \]
    and all remaining $\ell$-isogenies are descending.
\end{enumerate}
Thus, over $\bar{k}$ one has an infinite volcano. For instance, consider the picture depicted below:
\begin{center}
\begin{tikzpicture}[
  vertex/.style={circle, draw, fill=black, inner sep=1.4pt},
  scale=1
]

\node[vertex] (cN) at (0,1) {};
\node[vertex] (cE) at (1,0) {};
\node[vertex] (cS) at (0,-1) {};
\node[vertex] (cW) at (-1,0) {};

\draw (cN) -- (cE) -- (cS) -- (cW) -- (cN);

\node[vertex] (n1) at (0,2.5) {};
\node[vertex] (e1) at (2.5,0) {};
\node[vertex] (s1) at (0,-2.5) {};
\node[vertex] (w1) at (-2.5,0) {};

\draw (cN) -- (n1);
\draw (cE) -- (e1);
\draw (cS) -- (s1);
\draw (cW) -- (w1);


\node[vertex] (n2l) at (-0.6,4) {};
\node[vertex] (n2r) at (0.6,4) {};
\draw (n1) -- (n2l);
\draw (n1) -- (n2r);

\node[vertex] (e2l) at (4,0.6) {};
\node[vertex] (e2r) at (4,-0.6) {};
\draw (e1) -- (e2l);
\draw (e1) -- (e2r);

\node[vertex] (s2l) at (-0.6,-4) {};
\node[vertex] (s2r) at (0.6,-4) {};
\draw (s1) -- (s2l);
\draw (s1) -- (s2r);

\node[vertex] (w2l) at (-4,0.6) {};
\node[vertex] (w2r) at (-4,-0.6) {};
\draw (w1) -- (w2l);
\draw (w1) -- (w2r);


\node[vertex] (n3ll) at (-1,5.6) {};
\node[vertex] (n3lr) at (-0.2,5.6) {};
\node[vertex] (n3rl) at (0.2,5.6) {};
\node[vertex] (n3rr) at (1,5.6) {};
\draw (n2l) -- (n3ll);
\draw (n2l) -- (n3lr);
\draw (n2r) -- (n3rl);
\draw (n2r) -- (n3rr);

\node[vertex] (e3ll) at (5.6,1) {};
\node[vertex] (e3lr) at (5.6,0.2) {};
\node[vertex] (e3rl) at (5.6,-0.2) {};
\node[vertex] (e3rr) at (5.6,-1) {};
\draw (e2l) -- (e3ll);
\draw (e2l) -- (e3lr);
\draw (e2r) -- (e3rl);
\draw (e2r) -- (e3rr);

\node[vertex] (s3ll) at (-1,-5.6) {};
\node[vertex] (s3lr) at (-0.2,-5.6) {};
\node[vertex] (s3rl) at (0.2,-5.6) {};
\node[vertex] (s3rr) at (1,-5.6) {};
\draw (s2l) -- (s3ll);
\draw (s2l) -- (s3lr);
\draw (s2r) -- (s3rl);
\draw (s2r) -- (s3rr);

\node[vertex] (w3ll) at (-5.6,1) {};
\node[vertex] (w3lr) at (-5.6,0.2) {};
\node[vertex] (w3rl) at (-5.6,-0.2) {};
\node[vertex] (w3rr) at (-5.6,-1) {};
\draw (w2l) -- (w3ll);
\draw (w2l) -- (w3lr);
\draw (w2r) -- (w3rl);
\draw (w2r) -- (w3rr);

\node at (0,6.4) {$\vdots$};
\node at (-1,6.4) {$\vdots$};
\node at (1,6.4) {$\vdots$};
\node at (6.4,0) {$\hdots$};
\node at (6.4,1) {$\hdots$};
\node at (6.4,-1) {$\hdots$};
\node at (0,-6.4) {$\vdots$};
\node at (1,-6.4) {$\vdots$};
\node at (-1,-6.4) {$\vdots$};
\node at (-6.4,0) {$\hdots$};
\node at (-6.4,-1) {$\hdots$};
\node at (-6.4,1) {$\hdots$};
\end{tikzpicture}
\end{center}
\par Now let $k$ be a finite field and let $\mathcal{G}$ be an ordinary connected component of $\mathcal{G}_\ell(k)$, and let $E/k$ be any elliptic curve whose $j$–invariant lies in $\mathcal{G}$. Write $\cO_E=\Z+f_E\cO_K$ for the endomorphism ring of $E$, where $K=\Q(\sqrt{t^2-4q})$. The depth of $E$ in $\mathcal{G}$ is then given by $v_\ell(f_E)$, and this quantity depends only on the connected component containing $E$. Away from the crater, the structure of $\mathcal{G}$ is tree-like. Every vertex not on the crater admits a unique ascending $\ell$–isogeny, corresponding to an inclusion of endomorphism rings of index $\ell$.

\par
From this perspective, ordinary isogeny components may be viewed as finite truncations of an infinite $(\ell+1)$–regular tree, with the truncation occurring at a depth determined by the $\ell$–adic valuation of the discriminant of $\Z[\pi]$. This viewpoint is particularly useful when studying distribution questions in the next section.
\begin{theorem}[Kohel]
    With respect to notation above and assume that $\mathcal{G}=(V,E)$. Then the following assertions hold:
    \begin{enumerate}
        \item The vertices $V_i$ of depth $i$ all have the same endomorphism ring $\cO_i$.
        \item Assume that $V$ does not contain $0$ or $1728$, then $V_0$ has degree $1+\left(\frac{D_0}{\ell}\right)$, where $D_0:=\op{disc}\cO_0$. Further if $\left(\frac{D_0}{\ell}\right)\geq 0$, let $\mathfrak{l}|\ell$ be a prime of $\cO_0$. Then, $|V_0|$ is the order of $[\mathfrak{l}]$ in $\op{Cl}(\cO_0)$. Otherwise, $|V_0|=1$.
        \item The height of $\mathcal{G}$ is \[h(\mathcal{G})=\frac{1}{2}v_\ell\left( \frac{t^2-4p}{D_K} \right)\] where $t$ is the trace of $\pi$ for any elliptic curve in $V$ and $D_K$ is the discriminant of $\cO_K$.
    \end{enumerate}
\end{theorem}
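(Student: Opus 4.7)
The proof will combine three tools already in place: Kohel's proposition above (any $\ell$-isogeny shifts the endomorphism ring by index $1$ or $\ell$), the Deuring / CM action of $\op{Cl}(\mathcal{O})$ on $\op{Ell}_{\mathcal{O}}(k)$ (simply transitive whenever the latter is nonempty), and the horizontal/ascending/descending dichotomy for $\ell$-isogenies recalled in the excerpt. Throughout I write $f_E$ for the conductor of $\mathcal{O}_E = \Z + f_E \mathcal{O}_K$ and $f_\pi$ for the conductor of $\Z[\pi]$, so that $f_\pi^{\,2} D_K = t^2 - 4q$.

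For (1), I would first establish that the prime-to-$\ell$ part of $f_E$ is an invariant of the connected component: for any edge $E \to E'$, Kohel's proposition yields $f_{E'}/f_E \in \{1, \ell, \ell^{-1}\}$, so the prime-to-$\ell$ parts agree; iterating along paths in $\mathcal{G}$ then shows that all vertices share a common prime-to-$\ell$ conductor $m$. Since the depth is defined as $v_\ell(f_E)$, any two vertices at depth $i$ would have $f_E = \ell^i m$, forcing the common endomorphism ring $\mathcal{O}_i := \Z + \ell^i m \mathcal{O}_K$.

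For (2), I would exploit the hypothesis $j(E) \notin \{0, 1728\}$ to conclude $\op{Aut}(E) = \{\pm 1\}$, so distinct cyclic $\ell$-subgroups correspond bijectively to distinct $\ell$-isogenies. Horizontal isogenies from a crater vertex correspond to invertible $\mathcal{O}_0$-ideals of norm $\ell$, i.e., to primes of $\mathcal{O}_0$ above $\ell$; classical splitting theory supplies $1 + \left(\frac{D_0}{\ell}\right)$ such primes, which is the claimed crater degree. When $\ell$ is non-inert, I would fix such a prime $\mathfrak{l}$ and invoke the simply transitive Deuring action of $\op{Cl}(\mathcal{O}_0)$ on $\op{Ell}_{\mathcal{O}_0}(k)$: the orbit of any $E \in V_0$ under iterated $[\mathfrak{l}]$-translation traces out exactly the horizontal walk at the crater, so $V_0$ coincides with this orbit and $|V_0|$ equals the order of $[\mathfrak{l}]$ in $\op{Cl}(\mathcal{O}_0)$. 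When $\ell$ is inert there are no horizontal edges at all, yielding $|V_0|=1$.

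For (3), I would use that every vertex satisfies $\Z[\pi] \subseteq \mathcal{O}_E$, hence $f_E \mid f_\pi$. A vertex sits on the floor iff it admits no descending $\ell$-isogeny, and the unique index-$\ell$ suborder of $\mathcal{O}_E$ is $\Z + \ell f_E \mathcal{O}_K$, which appears in $\mathcal{G}$ iff it contains $\Z[\pi]$, i.e., iff $\ell f_E \mid f_\pi$. The floor condition thus forces $\ell f_E \nmid f_\pi$, and combined with $f_E \mid f_\pi$ this gives $v_\ell(f_E) = v_\ell(f_\pi)$. Since the crater sits at depth $0$, the height will equal $v_\ell(f_\pi) = \tfrac{1}{2} v_\ell\big((t^2 - 4q)/D_K\big)$. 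The hardest step will be (2): justifying that horizontal $\ell$-isogenies from crater vertices correspond precisely to invertible prime $\mathcal{O}_0$-ideals above $\ell$ (rather than merely to geometric subgroups) and that the induced $[\mathfrak{l}]$-action on $V_0$ is free requires a careful Deuring / Galois-descent argument respecting $k$-rationality, where most of the technical effort sits.
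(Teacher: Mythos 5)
The paper does not actually prove this theorem: its ``proof'' is a one-line citation to Kohel's thesis and to Sutherland's survey (Theorem~7 of \emph{Isogeny volcanoes}), so there is no in-paper argument to compare against. Your sketch is, in substance, the standard proof found in those references, and it is correct as an outline: (1) via the index-$1$-or-$\ell$ dichotomy propagated along paths, (2) via the correspondence between horizontal isogenies and invertible $\cO_0$-ideals of norm $\ell$ together with the free action of $\op{Cl}(\cO_0)$ on $\op{Ell}_{\cO_0}(k)$, and (3) via the characterization of the floor as the vertices with $v_\ell(f_E)=v_\ell(f_\pi)$.

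Two ingredients deserve to be made explicit beyond what you wrote. First, in step (3) you assert that the order $\Z+\ell f_E\cO_K$ ``appears in $\mathcal{G}$ iff it contains $\Z[\pi]$''; the forward direction is the containment $\Z[\pi]\subseteq\cO_{E'}$, but the converse is Deuring's theorem that \emph{every} order between $\Z[\pi]$ and $\cO_K$ is realized as an endomorphism ring within the isogeny class, together with the fact that each connected component actually attains both depth $0$ and depth $v_\ell(f_\pi)$ (every vertex with $\ell\mid f_E$ has a $k$-rational ascending isogeny because the unique ascending kernel over $\bar k$ is Galois-stable, and every vertex with $\ell f_E\mid f_\pi$ has a $k$-rational descending one). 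Without this the height could a priori be strictly smaller than $v_\ell(f_\pi)$. Second, in step (2) one must also argue that a path in $\mathcal{G}$ joining two crater vertices can be contracted to a purely horizontal path (using the uniqueness of the ascending neighbour), so that $V_0$ is a single $\langle[\mathfrak{l}]\rangle$-orbit rather than a union of several; you implicitly assume this when identifying $V_0$ with the orbit. You correctly flag the $k$-rationality of the CM action as the main technical burden; with these two points added, your outline matches the argument in the cited sources.
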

\begin{proof}
    The statement summarizes results proved in \cite{kohel}, see also \cite[Theorem 7]{sutherlandvolcanoes}.
\end{proof}
\noindent In fact, given any volcano $\mathcal{G}$, there does exist prime numbers $\ell$ and $p$ such that $\mathcal{G}$ occurs as a connected component of $\mathcal{G}_\ell(\F_p)$ (see \cite{inverse}).
\textit{Example:} We recall an example here due to Sutherland \cite[Example 9]{sutherlandvolcanoes}. Let $p:=411751$ and $\ell=3$. Set $t:=52$ and consider the isogeny class of elliptic curves $E_{/\F_p}$ with $\#E(\F_p)=t$. There are a total of $1008$ elliptic curves in this isogeny class. Their $j$-invariants make up $10$ $3$-volcanoes in this isogeny class. One has that $4p=t^2-v^2D$ with $v=90=2\times 3^2\times 5$ and $D=-203$. Thus all ten volcanoes have height equal to $2=v_3(90)$. The sizes of the craters consist of $12$ or $4$ vertices.

\section{Variation of volcano height in families of reductions}\label{s3}
In this section we fix an elliptic curve $E/\Q$ and an odd prime $\ell$. Let $N_E$ denote the conductor of $E$. For any prime $p\nmid \ell N_E$, let $E_p$ denote the reduction of $E$ modulo $p$, and define
\[
a_p := p+1-\#E_p(\F_p) \qquad \text{and}\quad \Delta_p := a_p^2-4p.
\]
Let $\pi_p$ denote the Frobenius endomorphism of $E_p$, and write $\Z[\pi_p]$ for the order it generates in $K_p:=\Q(\pi_p)$. Recall that $E_p$ is ordinary if and only if $p\nmid a_p$. In this case, the endomorphism ring $\cO_p := \op{End}(E_p)$
is an order in the imaginary quadratic field $K_p$, and may be written in the form $\cO_p = \Z + f_p \cO_{K_p}$, where $f_p\ge 1$ is the conductor of $\cO_p$.

Let $\mathcal{G}_p$ denote the connected component of the $\ell$-isogeny graph over $\F_p$ containing the vertex $j(E_p)$. Suppose that $p$ is a prime of ordinary reduction. Then the depth of $j(E_p)$ in $\mathcal{G}_p$, denoted $d(p)$, is given by $d(p)=v_\ell(f_p)$, while the height of the corresponding $\ell$-volcano is denoted by $H(p)$. In particular, one has $0 \le d(p) \le H(p)$.

\par By a well known result of Serre, if $E$ is non-CM, then the density of primes with good ordinary reduction is $1$. Our main objective is to determine the natural density (as \(x\to\infty\)) of primes \(p\le x\) for which $p$ is a prime of good ordinary reduction and \(H(p)=r\). When $E_p$ has ordinary reduction, the ring $\Z[\pi_p]$ is an order in the quadratic field $K_p=\Q(\pi_p)$, and we may write \[\Z[\pi_p]=\Z+f_{0,p}\cO_{K_p}\quad \text{and}\quad \cO_p=\Z+f_p\cO_{K_p}.\]Consequently, we find that \[\op{disc}(\Z[\pi_p])=f_{0,p}^{2}\op{disc}(\cO_{K_p})\quad \text{and}\quad \op{disc}(\cO_p)=f_p^{2}\op{disc}(\cO_{K_p}).\]It follows immediately that 
\begin{equation}\label{main disc eqn}\begin{split}
& d(p)=v_\ell(f_{p})=\tfrac12\, v_\ell\!\left(\frac{\op{disc}\cO_p}{\op{disc}\cO_{K_p}}\right),\\
& H(p)=v_\ell(f_{0,p})=\tfrac12\, v_\ell\!\left(\frac{\op{disc}\Z[\pi_p]}{\op{disc}\cO_{K_p}}\right)
   =\tfrac12\, v_\ell\!\left(\frac{a_p^{2}-4p}{\op{disc}\cO_{K_p}}\right).\end{split}
\end{equation}
\noindent Since $v_\ell\left(\op{disc}\cO_{K_p}\right)\in \{0,1\}$, one finds that $H(p)=r$ (resp. $d(p)=r$) if and only if $v_\ell(a_p^2-4p)\in \{2r, 2r+1\}$ (resp. $v_\ell(\op{disc}\cO_p)\in \{2r, 2r+1\}$). We also set $d'(p):=H(p)-d(p)$, which measures the distance of the vertex $j(E_p)$ from the floor. 
\par For an elliptic curve $E_{/\Q}$ and a natural number $n\geq 1$, set $E[n]:=\op{ker}\left\{\times n: E(\bar{\Q})\rightarrow E(\bar{\Q})\right\}$. Set $\op{G}_{\Q}:=\op{Gal}(\bar{\Q}/\Q)$ and note that $E[n]\simeq (\Z/n\Z)^2$ is a module over $\op{G}_{\Q}$. The automorphism group of $E[n]$ is isomorphic to $\op{GL}_2(\Z/n\Z)$. The action of $\op{G}_{\Q}$ on $E[n]$ is encoded by a Galois representation 
\[\rho_{E,n}:\op{G}_{\Q}\rightarrow \op{GL}_2(\Z/n\Z).\] If $m|n$, then multiplication by $(n/m)$ gives a surjective $\op{G}_{\Q}$-equivariant map $\pi_{n,m}:E[n]\rightarrow E[m]$. The mod-$m$ reduction of $\rho_{E,n}$ is then identified with $\rho_{E,m}$. The \emph{big Tate module} $\mathbb{T}_E$ is the Galois module $\varprojlim_n E[n]$, where the inverse limit is taken with respect to the maps $\pi_{n, m}$ defined above. We choose compatible bases $E[n]\simeq (\Z/n\Z)\cdot P_{1}^n\oplus (\Z/n\Z)\cdot P_{2}^n$, such that $\pi_{n,m}$ maps $P_i^n$ to $P_i^m$ for $i=1,2$. Let $P_i\in \mathbb{T}_E$ be the inverse limit $(P_i^n)_n$. Then, $\mathbb{T}_E=\widehat{\Z}P_1\oplus \widehat{\Z}P_2$ and its automorphism group is $\op{GL}_2(\widehat{\Z})$. The Galois representation on $\mathbb{T}$ is denoted 
\[\widehat{\rho}_E:\op{G}_{\Q}\rightarrow \op{GL}_2(\widehat{\Z}),\]
and in the literature is often referred to as the \emph{adelic Galois representation}. Given a prime number $\ell$, let $T_\ell(E):=\varprojlim_n E[p^n]$ be the \emph{$\ell$-adic Tate module} and let 
\[\widehat{\rho}_{E, \ell}: \op{G}_{\Q}\rightarrow \op{GL}_2(\Z_\ell)\] be the associated Galois representation. There is a natural decomposition $\mathbb{T}\simeq \prod_\ell T_\ell(E)$ and identify $\widehat{\rho}_E$ with the product $\prod_\ell \widehat{\rho}_{E, \ell}$. Given a natural number $n>0$, let $\Q(E[n])$ be the field \emph{cut out} by $\rho_{E, n}$. In other words, $\Q(E[n])$ is the field fixed by the kernel of $\rho_{E,n}$. By the Galois correspondence, $\op{Gal}(\Q(E[n])/\Q)$ can be identified with the image of $\rho_{E,n}$ in $\op{GL}_2(\Z/n\Z)$. In particular, if $\rho_{E,n}$ is surjective then the Galois group $\op{Gal}(\Q(E[n])/\Q)$ is isomorphic to $\op{GL}_2(\Z/n\Z)$. By the Neron--Ogg--Shafarevich criterion, the primes $p\nmid nN_E$ are unramified in $\Q(E[n])$. For each such prime $p$, choose a prime $\mathfrak{p}|p$ of $\Q(E[n])$ that lies above $p$. Let \[\op{Frob}_p:=\op{Frob}_{\mathfrak{p}}\in \op{Gal}(\Q(E[n])/\Q)\] be the associated Frobenius element. The conjugacy class generated by $\op{Frob}_p$ is independent of the choice of $\mathfrak{p}$.

\begin{theorem}[Serre's open image theorem \cite{serre1, serre2}]
    Suppose that $E$ does not have complex multiplication. Then, the image of $\widehat{\rho}_E$ is of finite index in $\op{GL}_2(\widehat{\Z})$. 
\end{theorem}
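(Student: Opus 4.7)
The plan is to prove openness of $\widehat{\rho}_E(\op{G}_{\Q})$ inside $\op{GL}_2(\widehat{\Z}) \cong \prod_\ell \op{GL}_2(\Z_\ell)$ in two stages: first establish openness of each $\ell$-adic image in $\op{GL}_2(\Z_\ell)$ separately, then assemble these local results across primes using a Goursat-type argument.

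For the $\ell$-adic step, fix a prime $\ell$ and let $G_\ell := \widehat{\rho}_{E,\ell}(\op{G}_{\Q})$. Being a closed subgroup of the compact $\ell$-adic Lie group $\op{GL}_2(\Z_\ell)$, $G_\ell$ is itself an $\ell$-adic Lie subgroup, and openness is equivalent to its Lie algebra $\mathfrak{g}_\ell \subseteq \mathfrak{gl}_2(\Q_\ell)$ being the full algebra. The Weil pairing identifies $\det \widehat{\rho}_{E,\ell}$ with the $\ell$-adic cyclotomic character, so trace maps $\mathfrak{g}_\ell$ onto $\Q_\ell$; it therefore suffices to show $\mathfrak{g}_\ell \cap \mathfrak{sl}_2(\Q_\ell) = \mathfrak{sl}_2(\Q_\ell)$. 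If this containment were strict, the classification of $\op{G}_{\Q}$-stable Lie subalgebras of $\mathfrak{sl}_2$, combined with $\ell$-adic Hodge theory at a prime of good reduction (Hodge-Tate weights via Sen's theory), would force the commutant of the image in $\op{End}(T_\ell(E)) \otimes \Q_\ell$ to be strictly larger than $\Q_\ell$. By Faltings's isogeny theorem this commutant identifies with $\op{End}(E) \otimes \Q_\ell$, so $E$ would have complex multiplication, contradicting the hypothesis.

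For the assembly step, I would first show that $\rho_{E,\ell}$ is surjective onto $\op{GL}_2(\F_\ell)$ for all but finitely many $\ell$, using the classification of subgroups of $\op{GL}_2(\F_\ell)$ with surjective determinant. A non-surjective image must lie in a Borel, in the normalizer of a split or non-split Cartan, or in the preimage of $A_4$, $S_4$, or $A_5$ under $\op{GL}_2(\F_\ell) \to \op{PGL}_2(\F_\ell)$. Borel containment forces a $\Q$-rational cyclic $\ell$-isogeny of $E$, ruled out for large $\ell$ by Mazur's theorem on rational isogenies. Cartan normalizer containment imposes restrictive congruences on $a_p$ for every unramified $p$, which can be excluded by Chebotarev applied to an auxiliary prime. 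The exceptional subgroups have bounded order and therefore occur for only finitely many $\ell$. Let $\Sigma$ denote the resulting finite exceptional set. For $\ell \geq 5$ outside $\Sigma$ the quotients $\op{PSL}_2(\F_\ell)$ are pairwise non-isomorphic simple groups, so a variant of Goursat's lemma forces the projection of $\widehat{\rho}_E(\op{G}_{\Q})$ onto $\prod_{\ell \notin \Sigma} \op{GL}_2(\F_\ell)$ to be surjective; combining with openness of each $\ell$-adic image from the first step then gives openness in $\op{GL}_2(\widehat{\Z})$.

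The principal obstacle is the $\ell$-adic openness step. Translating the arithmetic non-CM hypothesis into the Lie-theoretic statement that $\mathfrak{g}_\ell = \mathfrak{gl}_2(\Q_\ell)$ for every $\ell$ is the deepest ingredient, as it requires either substantial $\ell$-adic Hodge theory or Faltings's theorem as input. Mod-$\ell$ surjectivity for large $\ell$ and the Goursat assembly are, by comparison, relatively formal once the residual subgroup classification and Mazur's theorem are in hand.
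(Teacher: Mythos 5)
The paper does not prove this statement: it is quoted as a black box with a citation to Serre's original papers, so there is no internal argument to compare yours against. What you have written is an outline of the actual proof of Serre's open image theorem, and its architecture (local openness for each $\ell$, residual surjectivity for almost all $\ell$, then a Goursat-type assembly) is indeed the standard one, modernized by substituting Mazur's isogeny theorem and Faltings's isogeny theorem for Serre's original inputs. That is a reasonable high-level map of the proof, and vastly more than the paper attempts.

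As a proof, however, several steps are stated in a way that would not survive being written out. First, the exceptional subgroups do \emph{not} have bounded order: only their images in $\op{PGL}_2(\F_\ell)$ are bounded (by $60$), while the groups themselves grow with $\ell$ through the determinant. Finiteness of the exceptional set in this case requires the auxiliary argument that for a projectively exceptional image every element satisfies $\tr^2/\det \in \{0,1,2,4\}$ or the golden-ratio values, so that a single well-chosen prime $p$ of good reduction with $a_p^2/p$ avoiding these rationals forces $\ell$ to divide a fixed nonzero integer; asserting ``bounded order, hence finitely many $\ell$'' skips exactly the content. Second, the Cartan-normalizer exclusion is glossed at the point where it is hardest: the congruence $a_p\equiv 0 \pmod \ell$ for $p$ with $\varepsilon(\op{Frob}_p)=-1$ only bounds $\ell$ if the quadratic character $\varepsilon$ cut out by the Cartan is shown to be ramified only at primes dividing $N_E$ (via the local structure at $\ell$), so that only finitely many quadratic fields occur \emph{independently of} $\ell$ and the auxiliary prime can be chosen uniformly. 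Without that ramification control your auxiliary prime depends on $\ell$ and the argument does not close. Third, in the assembly step, pairwise non-isomorphism of the simple quotients $\op{PSL}_2(\F_\ell)$ does not by itself give surjectivity onto the product of the $\op{GL}_2(\F_\ell)$, nor does it combine formally with the finitely many merely-open local images: one needs the commutator reduction (the closure of $[H,H]$ lands in $\prod_\ell \op{SL}_2(\Z_\ell)$ with open projections because the commutator of an open subgroup of $\op{GL}_2(\Z_\ell)$ is open in $\op{SL}_2(\Z_\ell)$ for $\ell\ge 5$), together with the surjectivity of the determinant onto $\widehat{\Z}^\times$ from the Weil pairing. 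Finally, in the $\ell$-adic step the passage from ``$\mathfrak{g}_\ell\cap\mathfrak{sl}_2$ proper'' to ``commutant strictly larger than $\Q_\ell$'' silently uses semisimplicity of $V_\ell(E)$ to exclude the reducible-indecomposable (Borel) case; you should invoke Faltings for that as well, not only for identifying the commutant with $\op{End}(E)\otimes\Q_\ell$. None of these gaps indicates a wrong strategy, but each is a place where the sketch currently asserts a conclusion rather than proving it.
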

\noindent It follows from the above that for a non-CM elliptic curve $E$, the $\ell$-adic representation $\widehat{\rho}_{E, \ell}$ is surjective for all but finitely many primes $\ell$. A prime number $\ell$ is said to be \emph{exceptional} if $\widehat{\rho}_{E, \ell}$ is not surjective. The index $\delta_E:=[\op{GL}_2(\widehat{\Z}):\op{image} \widehat{\rho}_E]$ is even, and $E$ is said to be a \emph{Serre curve} if $\delta_E=2$. If $E$ is a Serre curve, there are no exceptional primes. 

\par Write $E=E_{A,B}:y^2=x^3+Ax+B$ where $(A,B)\in \Z^2$. Then $E$ is said to be minimal if there is no prime $p$ such that $p^4|A$ and $p^6|B$. The \emph{naive height} of $E$ is then $h(E):=\op{max} \{|A|^3, |B|^2\}$. Let $X>0$ be a real number and let $\cC(X)$ denote the family of elliptic curves $E_{A,B}$ of height at most $X$, namely
\[
\cC(X):=\{(A,B)\in \cC : h(E_{A,B}) \le X\}.
\]
\begin{definition}
Any set of isomorphism classes of elliptic curves over $\Q$ may be identified with a subset $S\subseteq \cC$. For $X>0$, define $S(X):=S\cap \cC(X)$. The density of $S$ (if it exists) is defined to be
\[
\lim_{X\to\infty} \frac{\# S(X)}{\# \cC(X)}.
\]
\end{definition}
\par Duke \cite{dukeexceptional} proved that the set of elliptic curves $E_{/\Q}$ with no exceptional primes has density $1$. In other words, almost all elliptic curves have no exceptional primes. Jones \cite{jonesalmostall} then refined this result to show that almost all elliptic curves are Serre curves. First we consider elliptic curves for which the $\ell$-adic Galois representation is surjective. By the result of Duke above, this assumption is satisfied for almost all elliptic curves.

\par Let $E_{/\Q}$ be an elliptic curve without complex multiplication and let $\ell$ be an odd prime. Let $r\geq 0$ be an integer. We prove three results in this section.
\begin{enumerate}
    \item Assume that $\widehat{\rho}_{E, \ell}$ is surjective. Theorem \ref{section 3 thm 1} gives the density of primes $p\neq \ell$ at which $E$ has good ordinary reduction and the volcano height $H(p)=r$. 
    \item For an elliptic curve as in part (1), we compute the density of primes $p\neq \ell$ at which $E$ has good ordinary reduction and distance from the floor $d'(p)=r$ (see Theorem \ref{section 3 thm 2}).
    \item Now assume that $E$ is any non-CM elliptic curve. Theorem \ref{section 3 thm 3} gives the desnity of primes $p\neq \ell$ for which $H(p)=d'(p)=r$ is positive.
\end{enumerate}
\noindent We interpret the conditions $H(p)=r$ and $d'(p)=r$ in terms of the image of Frobenius $\op{Frob}_p$ with respect to the $\ell$-adic Galois representation. After careful analysis of these conditions, the results then follow from an application of the Chebotarev density theorem.
\par We prove a counting result for matrices in $\op{GL}_2(\Z/\ell^n\Z)$ whose discriminant has a prescribed $\ell$-adic valuation. For $x\in \Z/\ell^n \Z$, write $x=\ell^t x'$ where $\ell\nmid x'$, where $t\in [0, n]$, with the convention that $t=n$ if $x=0$. We then define $v_\ell(x):=t$.
\begin{proposition}\label{section 3 main prop}
Let $\ell$ be an odd prime and let $n,r\geq 1$ be integers with $n\ge 2r$. Setting
$R=\Z/\ell^n\Z$, for $M=\begin{pmatrix}a&b\\ c&d\end{pmatrix}\in M_2(R)$ define
\[F(a,b,c,d):=(\tr M)^2-4\det M=(a-d)^2+4bc.
\]
Letting
\[
S_{n,r}:=\{M\in\op{GL}_2(R):\,v_\ell(F(a,b,c,d))\ge 2r\},
\]
we have that
\begin{equation}\label{S n r density}
\frac{|S_{n,r}|}{|\mathrm{GL}_2(\mathbb{Z}/\ell^n)|} 
= 2\ell^{-2r}
+
\frac{\ell^{-2r}\big(2+2\ell-(2r+2)\ell^{1-r}\big)}{\ell^{2}-1}.
\end{equation}
where the implied constant in $O_r$ depends on $r$ but not on $\ell$.
\end{proposition}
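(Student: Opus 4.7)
The plan is to reduce the count to a combinatorial problem over triples $(u,b,c)$ and evaluate it by stratifying on $j := v_\ell(u)$. Parametrize $M = \begin{pmatrix} a & b \\ c & d \end{pmatrix} \in M_2(R)$ by $(a,u,b,c)$ with $u := a - d$, so that $F(M) = u^2 + 4bc$ and $\det M = a(a-u) - bc$. For any triple $(u,b,c)$ with $v_\ell(u^2+4bc) \ge 2r \ge 2$, the quadratic $X^2 - uX - bc$ has discriminant $\equiv 0 \pmod{\ell}$, so it admits a single double root modulo $\ell$; hence exactly $\ell - 1$ residues of $a$ modulo $\ell$ make $\det M$ a unit, and each lifts to $R$ in $\ell^{n-1}$ ways. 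This yields the reduction
\[
|S_{n,r}| = (\ell-1)\ell^{n-1}\, T_{n,r}, \qquad T_{n,r} := \#\{(u,b,c)\in R^3 : v_\ell(u^2+4bc)\ge 2r\},
\]
and since the defining condition depends only on $(u,b,c) \bmod \ell^{2r}$, one further has $T_{n,r} = \ell^{3(n-2r)}\, T_{2r,r}$, reducing the problem to the computation of $T_{2r,r}$ inside $(\Z/\ell^{2r})^3$.

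The main combinatorial step is the evaluation of $T_{2r,r}$ by stratification on $j := v_\ell(u)$. When $j \ge r$, $u^2$ vanishes in $\Z/\ell^{2r}$ and the condition collapses to $bc \equiv 0$; the number of $u$ in this stratum is $\ell^r$, and the number of $(b,c)$ with $bc = 0$ in $\Z/\ell^{2r}$ is evaluated by summing over $s := v_\ell(b)$ with $v_\ell(c) \ge 2r - s$, treating the case $b = 0$ separately. When $0 \le j \le r-1$, the condition forces $4bc$ to equal a prescribed element of valuation exactly $2j$; stratifying further by $(s,t) := (v_\ell(b), v_\ell(c))$ with $s + t = 2j$, one shows via the surjectivity of truncation on local unit groups that the count of admissible $(b,c)$-pairs is the same for each such $(s,t)$, giving a total of $(2j+1)(\ell-1)\ell^{2r-1}$ pairs per admissible $u$, of which there are $(\ell-1)\ell^{2r-j-1}$.

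Summing over the two strata produces an expression in which the polynomial sum $\Sigma_r(\ell) := \sum_{j=0}^{r-1}(2j+1)\ell^{-j}$ appears, and I would evaluate this in closed form by differentiating the geometric series $\sum_{j=0}^{r-1} x^j = (1-x^r)/(1-x)$ at $x = \ell^{-1}$. Substituting the result back into the formula for $T_{2r,r}$ and dividing by $|\op{GL}_2(\Z/\ell^n)| = \ell^{4n-3}(\ell-1)(\ell^2-1)$, the factor $\ell^{3(n-2r)+(n-1)-(4n-3)} = \ell^{2-6r}$ neatly cancels all $n$-dependence; collecting terms of like order in $\ell$ then yields the stated density. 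The most delicate step will be the bookkeeping in the stratification of $(b,c)$-pairs with prescribed product in $\Z/\ell^{2r}$: the boundary strata in which one of $b,c$ vanishes contribute separately from the units-based count, and a miscount there alters the lower-order coefficients of the final density.
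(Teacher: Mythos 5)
Your reduction steps coincide with the paper's: eliminating one variable via the unit-determinant condition (your count of $(\ell-1)\ell^{n-1}$ admissible $a$ for each triple with $v_\ell(F)\ge 2$ is exactly the paper's count of admissible $w=a+d$), passing to $\ell^{3(n-2r)}$ times a count modulo $\ell^{2r}$, and stratifying by $j=v_\ell(u)$. The one genuine difference is the order of counting in the strata $0\le j\le r-1$: you fix $u$ first and count pairs $(b,c)$ with $4bc\equiv-u^2$, obtaining $(2j+1)(\ell-1)\ell^{2r-1}$ pairs for each of the $(\ell-1)\ell^{2r-j-1}$ admissible $u$; the paper instead fixes $(b,c)$ with $v_\ell(bc)=2j$ and asserts that each such pair admits $2\ell^{j}$ values of $u$. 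Your order is the correct one: a value of $u$ exists only when $-4bc\,\ell^{-2j}$ is a quadratic residue modulo $\ell$, which holds for exactly half of those pairs, so the paper overcounts these strata by a factor of $2$.

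The consequence is that your plan, carried out faithfully, does \emph{not} yield the displayed formula, contrary to your closing sentence. Your strata sum to
\[
T_{2r,r}=\sum_{j=0}^{r-1}(2j+1)\ell^{4r-j}(1-\ell^{-1})^{2}+\ell^{3r-1}\bigl((2r+1)\ell-2r\bigr)=\ell^{4r}+\ell^{4r-1}-\ell^{3r-1},
\]
which gives
\[
\frac{|S_{n,r}|}{|\op{GL}_2(\Z/\ell^n)|}=\frac{\ell^{1-3r}\bigl(\ell^{r+1}+\ell^{r}-1\bigr)}{\ell^{2}-1}=\ell^{-2r}+O_r(\ell^{-2r-1}),
\]
whereas the proposition asserts $\frac{\ell^{1-3r}(2\ell^{r+1}+2\ell^{r}-(2r+1)\ell-1)}{\ell^{2}-1}=2\ell^{-2r}+O_r(\ell^{-2r-1})$. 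A direct check at $\ell=3$, $r=1$, $n=2$ settles the discrepancy: enumerating $x\in\Z/9$ gives $99$ triples $(x,y,z)$ with $x^{2}+4yz\equiv 0\pmod 9$ (namely $3\times 21$ for $v_3(x)\ge 1$ plus $6\times 6$ for $x$ a unit), hence $|S_{2,1}|=6\cdot 99=594$ and a density of $594/3888=11/72$, matching your method, while the stated formula gives $5/18$. So your method is sound and in fact exposes an error in the proposition as stated (which propagates to Theorem~\ref{section 3 thm 1}); the only gap in your write-up is the unverified final assertion that collecting terms reproduces the stated density.
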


\begin{proof}
Make the invertible $R$–linear change of variables
\[
x=a-d,\qquad y=b,\qquad z=c,\qquad w=a+d,
\]
whose Jacobian is a unit in $R$ since $\ell$ is odd. Then with respect to the variables $(x,y,z,w)$, we find that
\[
F(a,b,c,d)=x^2+4yz,
\qquad\text{and}\quad
\det M=\frac{w^2-x^2}{4}-yz.
\]
We write $F(x,y,z):=F(a,b,c,d)=x^2+4yz$. The condition $v_\ell\left(F(a,b,c,d)\right)\ge2r$ defining the set $S_{n,r}$ depends only on $(x,y,z)$ and not on $w$. The congruence $x^2+4yz\equiv0\pmod{\ell^{2r}}$ implies that $\det M\equiv w^2/4\pmod{\ell^{2r}}$ and thus $\det M\in R^\times$ if and only if $w\not\equiv0\pmod\ell$.  Hence each
$(x,y,z)\in R^3$ satisfying $F(x,y,z)\equiv 0\pmod{\ell^{2r}}$ contributes to exactly $\ell^{\,n-1}(\ell-1)$ choices of $w$. Therefore, we find that
\[|S_{n,r}|=\ell^{\,n-1}(\ell-1)\#\{(x,y,z)\in R^3\mid F(x,y,z)\equiv 0\pmod{\ell^{2r}}\}.\]
\noindent The congruence $F(x,y,z)\equiv0\pmod{\ell^{2r}}$ depends only on residues
modulo $\ell^{2r}$, so the number of solutions modulo $\ell^n$ is
$\ell^{3(n-2r)}$ times the number
\[
N_0:=\#\{(x,y,z)\in(\Z/\ell^{2r}\Z)^3|\ F(x,y,z)\equiv0\pmod{\ell^{2r}}\}.
\] 
\noindent In other words,
\[|S_{n,r}|=\ell^{\,n-1}(\ell-1)\ell^{3(n-2r)} N_0=\ell^{4n-6r-1}(\ell-1)N_0.\]
\noindent Write $t=-x^2/4$ and $u=v_\ell(x)\in [0,2r]$. Thus
\begin{equation}\label{N_0 eqn}N_0=\sum_{u=0}^{2r} \#\left\{(x,y,z)\in(\Z/\ell^{2r}\Z)^3|\, v_\ell(x)=u\,\text{and}\, yz\equiv -\frac{x^2}{4}\pmod{\ell^{2r}}\right\}.\end{equation}
Suppose that $u<r$, the congruence 
\[yz\equiv -\frac{x^2}{4}\pmod{\ell^{2r}}\]determines $x^2\in \Z/\ell^{2r}\Z$. Writing $x=\ell^ux'$ where $x'$ is an element in $\Z/\ell^{2r-u}$ which is not divisible by $\ell$, we find that $(x')^2\pmod{\ell^{2r-2u}}$ is determined. Thus for any pair $(y,z)$ with $v_\ell(yz)=u$, the number of choices of $x$ is $2\ell^u$. Thus, we find that for $u<r$,
\begin{equation}\label{u<r eqn}\begin{split}&\#\left\{(x,y,z)\in(\Z/\ell^{2r}\Z)^3|\, v_\ell(x)=u\,\text{and}\, yz\equiv -\frac{x^2}{4}\pmod{\ell^{2r}}\right\}\\
=&2\ell^u\#\left\{(y,z)\in(\Z/\ell^{2r}\Z)^2|\, v_\ell(y)+v_\ell(z)=2u\right\}\\
=& 2\ell^u\sum_{i+j=2u} \#\left\{(y,z)\in(\Z/\ell^{2r}\Z)^2|\, v_\ell(y)=i\quad\text{and}\quad v_\ell(z)=j\right\}\\
=& 2\ell^u\sum_{i+j=2u} \varphi(\ell^{2r-i})\varphi(\ell^{2r-j})\\
=& 2\ell^u\sum_{i+j=2u} \ell^{4r-2u}\left(1-\ell^{-1}\right)^2\\
=& 2(2u+1) \ell^{4r-u}\left(1-\ell^{-1}\right)^2.
\end{split}\end{equation}
Next, consider the case when $u\geq r$ and thus $x^2\equiv 0\pmod{\ell^{2r}}$. In this case, we find that 
\[\begin{split}&\#\left\{(x,y,z)\in(\Z/\ell^{2r}\Z)^3|\, v_\ell(x)=u\,\text{and}\, yz\equiv 0\pmod{\ell^{2r}}\right\}\\
=&\sum_{\substack{0\leq i,j\leq 2r\\
i+j\geq 2r}} \#\left\{(x,y,z)\in(\Z/\ell^{2r}\Z)^2|v_\ell(x)=u,\quad \, v_\ell(y)=i\quad\text{and}\quad v_\ell(z)=j\right\}\\
=& \sum_{\substack{0\leq i,j\leq 2r\\
i+j\geq 2r}}\varphi(\ell^{2r-i})\varphi(\ell^{2r-j})\varphi(\ell^{2r-u}).\\
\end{split}\]
\noindent Changing variables \(p:=2r-i,\; q:=2r-j\), we find that \(p,q\in\{0,\dots,2r\}\) and the condition \(i+j\ge2r\) is equivalent to \(p+q\le2r\). Hence
\[
\sum_{\substack{0\le i,j\le 2r\\ i+j\ge 2r}}
\varphi(\ell^{2r-i})\varphi(\ell^{2r-j})
=
\sum_{p=0}^{2r}\sum_{q=0}^{2r-p}\varphi(\ell^{p})\varphi(\ell^{q}).
\]
\noindent For \(m\ge1\) we have \(\varphi(\ell^{m})=\ell^{m}-\ell^{m-1}=\ell^{m-1}(\ell-1)\), and \(\varphi(1)=1\). Set
\(
a_m:=\varphi(\ell^m)
\)
and denote the partial sums \(S_m:=\sum_{q=0}^{m}a_q\). Clearly, $S_0=1=\ell^0$ and for $m\geq 1$, one has that
\[
S_m=1+\sum_{q=1}^m \ell^{q-1}(\ell-1)=1+(\ell-1)\frac{\ell^{m}-1}{\ell-1}=\ell^{m}.
\]
Consequently,
\[
\begin{split} \sum_{p=0}^{2r}\sum_{q=0}^{2r-p}a_p a_q
=&\sum_{p=0}^{2r} a_p\Big(\sum_{q=0}^{2r-p}a_q\Big)
=\sum_{p=0}^{2r} a_p\,\ell^{2r-p}\\
=&\ell^{2r} + \sum_{p=1}^{2r} \ell^{p-1}(\ell-1)\,\ell^{2r-p}\\
= & \ell^{2r} + \sum_{p=1}^{2r} \ell^{2r-1}(\ell-1)\\
=&\ell^{2r-1}\big((2r+1)\ell - 2r\big).
\end{split}
\]
We have shown that for $u\geq r$,
\begin{equation}\label{u>=r eqn}\begin{split}
&\#\left\{(x,y,z)\in(\Z/\ell^{2r}\Z)^3|\, v_\ell(x)
=u\,\text{and}\, yz\equiv 0\pmod{\ell^{2r}}\right\}\\
=&\begin{cases}
\ell^{2r-1}\big((2r+1)\ell-2r\big), & \text{if }u=2r,\\
\ell^{4r-u-2}(\ell-1)\big((2r+1)\ell-2r\big), & \text{if }r\le u\le 2r-1.
\end{cases}
\end{split}
\end{equation}
Substituting \eqref{u<r eqn} and \eqref{u>=r eqn} into \eqref{N_0 eqn}, we find that
\[N_0=\sum_{u=0}^{r-1}2(2u+1) \ell^{4r-u}\left(1-\ell^{-1}\right)^2+\sum_{u=r}^{2r-1}\ell^{4r-u-2}(\ell-1)\big((2r+1)\ell-2r\big)+\ell^{2r-1}\big((2r+1)\ell-2r\big).\]
\noindent Set 
\[\begin{split}
&S_1:=\sum_{u=0}^{r-1}2(2u+1) \ell^{4r-u}\left(1-\ell^{-1}\right)^2,\\
&S_2:=\sum_{u=r}^{2r-1}\ell^{4r-u-2}(\ell-1)\big((2r+1)\ell-2r\big),\\
&S_3:=\ell^{2r-1}\big((2r+1)\ell-2r\big).
\end{split}\]
For the first sum above, consider the following reductions:
\[
\begin{split}\sum_{u=0}^{r-1} (2u+1)\ell^{-u}(1-\ell^{-1})^{2}
= &(1-\ell^{-1})^{2}\Bigl(2\sum_{u=0}^{r-1}u\ell^{-u}+\sum_{u=0}^{r-1}\ell^{-u}\Bigr)\\
= &(1-\ell^{-1})^{2}\left( 
2\cdot\frac{\ell^{-1}(1-r\ell^{-r-1}+(r-1)\ell^{-r})}{(1-\ell^{-1})^{2}}
+ \frac{1-\ell^{-r}}{1-\ell^{-1}}
\right)\\
= &2\ell^{-1}(1-r\ell^{-r-1}+(r-1)\ell^{-r}) + (1-\ell^{-r})(1-\ell^{-1})\\
= &1+\ell^{-1}-(2r+1)\ell^{-r}+(2r-1)\ell^{-r-1}.
\end{split}
\]
\noindent Multiply by $2 \ell^{4r}$ to deduce that
\[
S_1=\sum_{u=0}^{r-1}2(2u+1) \ell^{4r-u}\left(1-\ell^{-1}\right)^2= 2\ell^{4r} + 2\ell^{4r-1} - 2(2r+1)\ell^{3r} + 2(2r-1)\ell^{3r-1}.
\]
\noindent For the second sum, set $A = (2r+1)\ell - 2r$. We find that
\[
S_2=\sum_{u=r}^{2r-1} \ell^{4r-u-2} (\ell-1) A 
= A (\ell-1) \sum_{u=r}^{2r-1} \ell^{4r-u-2}=A \ell^{3r-1} - A \ell^{2r-1}.
\]
Thus one finds that
\[
S_2 + S_3 = A \ell^{3r-1} - A \ell^{2r-1} + A \ell^{2r-1} = A \ell^{3r-1}.
\]
Combining the above, we deduce that
\[
N_0= S_1 + S_2 + S_3 =  2\ell^{4r} + 2\ell^{4r-1} - (2r+1)\ell^{3r} - \ell^{3r-1}.
\]
and 
\[
|S_{n,r}| = \ell^{4n-6r-1}(\ell-1)N_0=(\ell-1)\, \ell^{4n-3r-2} \Big( 2 \ell^{r+1} + 2 \ell^r - (2r+1)\ell -1 \Big)
\]
Thus, we have shown that
\[\begin{split}
\frac{|S_{n,r}|}{|\op{GL}_2(\mathbb{Z}/\ell^n)|} 
=&\frac{(\ell-1)\,\ell^{4n-3r-2}\big(2\ell^{r+1}+2\ell^{r}-(2r+1)\ell-1\big)}
       {\ell^{4n-3}(\ell-1)^2(\ell+1)}\\
=&\frac{\ell^{\,1-3r}\big(2\ell^{r+1}+2\ell^{r}-(2r+1)\ell-1\big)}
       {(\ell^2-1)}\\
=&
2\ell^{-2r}
+
\frac{\ell^{-2r}\big(2+2\ell-(2r+2)\ell^{1-r}\big)}{\ell^{2}-1}.
\end{split}
\]
\end{proof}

\begin{theorem}\label{section 3 thm 1}
    Let $\ell$ be an odd prime such that $\ell\neq \op{char}k$ and assume that $E$ is a non-CM elliptic curve for which $\widehat{\rho}_{E, \ell}:\op{G}_{\Q}\rightarrow \op{GL}_2(\Z_\ell)$ is surjective. Then for $r>0$, the density of primes $p$ at which $E$ has good ordinary reduction and $H(p)=r$ is precisely 
    \[\begin{split}&2\ell^{-2r}(1-\ell^{-2})
+
\frac{\ell^{-2r}}{\ell^{2}-1}
\Big(
2+2\ell-2\ell^{-2}-2\ell^{-1}
-(2r+2)\ell^{1-r}
+(2r+4)\ell^{-r-2}
\Big)\\
=& 2\ell^{-2r}+O_r(\ell^{-2r-1}),\end{split}\]where the implied constant in $O_r$ depends on $r$ but not on $\ell$. On the other hand, if $r=0$, the density equals 
\[
1-2\ell^{-2}-\frac{2\ell^{-2}}{\ell+1}.
\]

\end{theorem}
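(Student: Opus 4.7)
The plan is to translate the condition $H(p)=r$ into a congruence on the image of Frobenius under $\widehat{\rho}_{E,\ell}$, then invoke Chebotarev's density theorem together with the counting formula of Proposition~\ref{section 3 main prop}.

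First I would recall from \eqref{main disc eqn} that, for a prime $p \neq \ell$ of good ordinary reduction, $H(p) = \tfrac{1}{2} v_\ell((a_p^2 - 4p)/\op{disc}\cO_{K_p})$. Since $v_\ell(\op{disc}\cO_{K_p}) \in \{0,1\}$, the condition $H(p) = r$ is equivalent to $v_\ell(a_p^2 - 4p) \in \{2r, 2r+1\}$, that is, $v_\ell(a_p^2 - 4p) \geq 2r$ but not $\geq 2r+2$. Observing that if $\sigma := \widehat{\rho}_{E,\ell}(\op{Frob}_p) \in \op{GL}_2(\Z_\ell)$, then $\tr(\sigma) = a_p$ and $\det(\sigma) = p$, so that $F(\sigma) = (\tr\sigma)^2 - 4\det\sigma = a_p^2 - 4p$. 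Hence for any $n \geq 2r+2$, the condition $H(p) = r$ is equivalent to the image of $\op{Frob}_p$ modulo $\ell^n$ lying in $S_{n,r} \setminus S_{n,r+1}$, where $S_{n,k}$ is defined as in Proposition~\ref{section 3 main prop}.

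Next I would exploit the hypothesis that $\widehat{\rho}_{E,\ell}$ is surjective to identify $\op{Gal}(\Q(E[\ell^n])/\Q) \cong \op{GL}_2(\Z/\ell^n)$. Applying the Chebotarev density theorem to this finite Galois extension and using that the set $S_{n,r} \setminus S_{n,r+1}$ is stable under conjugation (as it is cut out by the conjugation-invariant functions $\tr$ and $\det$), one concludes that the density of primes $p$ of good ordinary reduction with $H(p) = r$ equals $(|S_{n,r}| - |S_{n,r+1}|)/|\op{GL}_2(\Z/\ell^n)|$. The finitely many primes dividing $\ell N_E$ contribute density zero, and the condition of ordinary reduction may be imposed ``for free'' since by Serre's theorem the set of good ordinary primes has density $1$.

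Finally I would substitute the closed-form expression from Proposition~\ref{section 3 main prop} at both indices $r$ and $r+1$ (choosing, say, $n = 2r+2$, which is admissible for both applications) and simplify. Writing
\[
d(k) := 2\ell^{-2k} + \frac{\ell^{-2k}\bigl(2 + 2\ell - (2k+2)\ell^{1-k}\bigr)}{\ell^2 - 1},
\]
the stated density for $r > 0$ is exactly $d(r) - d(r+1)$, which after factoring out $\ell^{-2r}$ and collecting terms gives the formula in the theorem, with the asymptotic $2\ell^{-2r} + O_r(\ell^{-2r-1})$ being evident from the leading $2\ell^{-2r}(1-\ell^{-2})$ term. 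The boundary case $r = 0$ is handled identically: $H(p) = 0$ is equivalent to $v_\ell(a_p^2 - 4p) \leq 1$, whose density is $1 - d(1)$, and a direct evaluation of $d(1)$ yields $1 - 2\ell^{-2} - 2\ell^{-2}/(\ell+1)$. No serious obstacle is expected beyond this bookkeeping; the main work has already been absorbed into Proposition~\ref{section 3 main prop}.
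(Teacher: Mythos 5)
Your proposal is correct and follows essentially the same route as the paper: translate $H(p)=r$ into the Frobenius image lying in the conjugation-stable set $S_{n,r}\setminus S_{n,r+1}$, apply Chebotarev via the surjectivity hypothesis, and subtract the two densities from Proposition~\ref{section 3 main prop}. Your choice $n=2r+2$ is in fact slightly more careful than the paper's stated $n\ge 2r$, since the proposition's hypothesis must also hold at index $r+1$; otherwise the arguments coincide.
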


\begin{proof}
Let $p$ be a prime at which $E$ has good ordinary reduction and let $n\geq 2r$. By the Neron--Ogg-Shafarevich criterion, the representation $\rho_{E, \ell^n}:\op{G}_{\Q}\rightarrow \op{GL}_2(\Z/\ell^n\Z)$ is unramified at $p$. Let $\sigma_p:=\rho_{E, \ell^n}(\op{Frob}_p)$ be the image of the Frobenius element. The characteristic polynomial of $\sigma_p$ is 
    \[\op{ch}(\sigma_p, T)=T^2-a_p(E)T+p\]\noindent and its discriminant is $a_p^2-4p$. We note that this characteristic polynomial is independent of the choice of Frobenius element. Recall from \eqref{main disc eqn} that $H(p)=r$ if and only if $v_\ell(a_p^2-4p)\in \{2r, 2r+1\}$. Consequently, $H(p)=r$ if and only if $p\in S_{n, r}\setminus S_{n, r+1}$. Note that the set $S_{n, r}\setminus S_{n, r+1}$ is stable under conjugation. The homomorphism $\rho_{E,n}$ gives rise to an isomorphism 
    \[\rho:\op{Gal}(\Q(E[\ell^n]/\Q)\xrightarrow{\sim} \op{GL}_2(\Z/\ell^n\Z),\] and let $T_{n,r}:=\rho^{-1}\left(S_{n, r}\setminus S_{n, r+1}\right)$. 
    
    \par Let $p\neq \ell$ be a prime of good ordinary reduction. As a consequence of the above discussion, $H(p)=r$ if and only if $\op{Frob}_p$ is contained in the conjugation stable set $T_{n,r}$.  A result of Serre shows that the set of primes $p$ at which $E$ has good ordinary reduction is $1$. By the Chebotarev density theorem, the density of such primes $p\neq \ell$ for which $H(p)=r$ is equal to \[\frac{|T_{n,r}|}{|\op{GL}_2(\Z/\ell^n\Z)|}=\frac{|S_{n,r}|}{|\op{GL}_2(\Z/\ell^n\Z)|}-\frac{|S_{n,r+1}|}{|\op{GL}_2(\Z/\ell^n\Z)|}.\]
    \noindent From \eqref{S n r density}, we find that for $r>0$,
    \[\frac{|T_{n,r}|}{|\op{GL}_2(\Z/\ell^n\Z)|}=2\ell^{-2r}(1-\ell^{-2})
+
\frac{\ell^{-2r}}{\ell^{2}-1}
\Big(
2+2\ell-2\ell^{-2}-2\ell^{-1}
-(2r+2)\ell^{1-r}
+(2r+4)\ell^{-r-2}
\Big).
\]
On the other hand, if $r=0$ we have that 
\[\frac{|T_{n,0}|}{|\op{GL}_2(\Z/\ell^n\Z)|}=1-\frac{|S_{n,1}|}{|\op{GL}_2(\Z/\ell^n\Z)|}=1-2\ell^{-2}-\frac{2\ell^{-2}}{\ell+1}.
\]
\noindent This completes the proof.
\end{proof}
\par Setting $b_p:=[\cO_p:\Z[\pi_p]]=f_p/f_{0,p}$, a result of Duke and Toth \cite{duketoth} states that for any integer $n$ which is coprime to $p$,
$\rho_{E,n}(\op{Frob}_p)$ is conjugate to the mod-$n$ reduction of the matrix 
\begin{equation}\label{duketoth matrix}\mtx{\frac{a_p+b_p\delta_p}{2}}{b_p}{b_p\frac{(\Delta_p-\delta_p)}{2}}{\frac{a_p-b_p\delta_p}{2}},\end{equation} where $\Delta_p$ is the discriminant of $\cO_p$ and $\delta_p=0,1$ according to as to $a_p\equiv 0,1\pmod{2}$. 

\begin{theorem}\label{section 3 thm 2}
    Let $\ell$ be an odd prime such that $\ell\neq \op{char}k$ and assume that $E$ is a non-CM elliptic curve for which $\widehat{\rho}_{E, \ell}:\op{G}_{\Q}\rightarrow \op{GL}_2(\Z_\ell)$ is surjective. Then for $r>0$, the density of primes $p$ at which $E$ has good ordinary reduction and $d'(p)=r$ is precisely $\frac{\ell^2+\ell+1}{(\ell+1)\ell^{3r+1}}$. The density of primes $p$ for which $d'(p)=0$ equals $\frac{\ell^3-\ell-1}{\ell(\ell-1)(\ell+1)}$.
\end{theorem}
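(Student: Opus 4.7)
The plan parallels Theorem \ref{section 3 thm 1}: encode $d'(p)=r$ as a conjugation-invariant condition on $\rho_{E,\ell^n}(\op{Frob}_p)$, then apply the Chebotarev density theorem. By definition,
\[
d'(p)=H(p)-d(p)=v_\ell(f_{0,p})-v_\ell(f_p)=v_\ell(b_p),
\]
where $b_p=f_{0,p}/f_p=[\cO_p:\Z[\pi_p]]$. To recognize $v_\ell(b_p)$ inside the Frobenius matrix I would invoke the Duke--Toth formula \eqref{duketoth matrix}: $\rho_{E,\ell^n}(\op{Frob}_p)$ is conjugate to the matrix $M$ displayed there, and a direct calculation gives
\[
2M-\tr(M)\,I=b_p\begin{pmatrix}\delta_p & 2 \\ \Delta_p-\delta_p & -\delta_p\end{pmatrix}.
\]
Since $\ell$ is odd the entry $2$ is a unit in $\Z_\ell$, so the $\ell$-adic valuation of the gcd of the entries of $2M-\tr(M)I$ equals $v_\ell(b_p)$ exactly. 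The function $c(M):=v_\ell(\gcd\text{ of entries of }2M-\tr(M)I)$ is visibly conjugation-invariant on $\op{GL}_2(\Z/\ell^n)$ and satisfies $c(\rho_{E,\ell^n}(\op{Frob}_p))=d'(p)$ whenever $n>d'(p)$.

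With this bridge in place, the surjectivity of $\widehat{\rho}_{E,\ell}$ together with Chebotarev reduces the statement to counting matrices in $\op{GL}_2(\Z/\ell^n)$ with $c(M)=r$. Because $\ell$ is odd, each $M$ decomposes uniquely as $M=\tfrac{t}{2}I+N$, with $t=\tr M$ and $N$ the trace-zero matrix with first row $(x,y)$ and second row $(z,-x)$. Then $\det M=t^2/4-x^2-yz$ and $c(M)=\min\{v_\ell(x),v_\ell(y),v_\ell(z)\}$. For $r\geq 1$, write $(x,y,z)=\ell^r(x',y',z')$ with $(x',y',z')\in(\Z/\ell^{n-r})^3$ primitive; then $\det M\equiv t^2/4\pmod\ell$, so invertibility decouples to the single condition $\ell\nmid t$. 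The count thus factors as
\[
\varphi(\ell^n)\cdot\bigl(\ell^{3(n-r)}-\ell^{3(n-r-1)}\bigr)=\ell^{n-1}(\ell-1)\cdot\ell^{3(n-r-1)}(\ell^3-1),
\]
and dividing by $|\op{GL}_2(\Z/\ell^n)|=\ell^{4n-3}(\ell-1)^2(\ell+1)$, then using $\ell^3-1=(\ell-1)(\ell^2+\ell+1)$, yields the claimed density $\tfrac{\ell^2+\ell+1}{(\ell+1)\ell^{3r+1}}$.

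For $r=0$, rather than repeat a case analysis (which would require tracking whether $x^2+yz$ is a nonzero square, a non-square, or zero modulo $\ell$), I would pass to the complement: the values $d'(p)\in\{0,1,2,\dots\}$ partition the set of good ordinary primes, whose density is $1$ since $E$ is non-CM, so
\[
\text{density}(d'(p)=0)=1-\sum_{r\geq 1}\frac{\ell^2+\ell+1}{(\ell+1)\ell^{3r+1}}=1-\frac{1}{\ell(\ell^2-1)}=\frac{\ell^3-\ell-1}{\ell(\ell-1)(\ell+1)},
\]
after summing the geometric series. The single conceptual step here is the translation $d'(p)\leftrightarrow c(M)$; once that is in place, the counting is more elementary than in Proposition \ref{section 3 main prop}, because primitivity at level $\ell^r$ forces $\det M\equiv t^2/4\pmod\ell$ so the determinant-unit constraint and the content constraint decouple cleanly. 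To make the complement argument fully rigorous I would also verify that the mass of $\{M:c(M)=n\}$ (essentially scalar matrices in $\op{GL}_2(\Z/\ell^n)$) is $O(\ell^{-3n})$, so that it vanishes in the limit and the geometric series truly accounts for the complement of $\{r=0\}$.
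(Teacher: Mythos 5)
Your argument is correct and follows essentially the same route as the paper's: the Duke--Toth matrix identifies $d'(p)=v_\ell(b_p)$ with the largest $k$ for which $\rho_{E,\ell^k}(\op{Frob}_p)$ is scalar, and Chebotarev then reduces everything to counting matrices in $\op{GL}_2(\Z/\ell^{n})$ that are scalar mod $\ell^r$ but not mod $\ell^{r+1}$ (your count via the trace-zero part agrees with the paper's $|D_r|=\ell^{r-1}(\ell-1)(\ell^4-\ell)$ at level $\ell^{r+1}$). The only cosmetic difference is at $r=0$, where the paper simply counts nonscalar matrices in $\op{GL}_2(\F_\ell)$ rather than summing your geometric series; your tail estimate makes the complementation rigorous, so both variants are fine.
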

\begin{proof}
    By a well known result of Serre, the set of primes $p$ at which $E$ has good ordinary reduction has density $1$. Assume that $r>0$ and let $D_{r}$ be the set of matrices in $\op{GL}_2(\Z/\ell^{r+1})$ which are not scalar but are scalar modulo $\ell^r$. It is easy to see that 
    \[|D_r|=(\ell^{r-1}(\ell-1))(\ell^4-\ell).\]
    Let $p\neq \ell$ be a prime of good ordinary reduction and write $b_p=\ell^v b'$ where $v:=v_\ell(b_p)$. Then since $\rho_{E,\ell^n}(\op{Frob}_p)$ is conjugate to the mod-$\ell^n$ reduction of the matrix \eqref{duketoth matrix}, it follows that $v_\ell(b_p)$ equals the smallest value of $k$ such that $\rho_{E, \ell^k}(\op{Frob}_p)$ is scalar. In other words, $d'(p)=r$ if and only if $\rho_{E,\ell^{r+1}}(\op{Frob}_p)\in D_{r}$. It is assumed that $\widehat{\rho}_{E, \ell}$ is surjective, and consequently, 
    \[\op{Gal}(\Q(E[\ell^{r+1}])/\Q)\xrightarrow{\sim} \op{GL}_2(\Z/\ell^{r+1})\] via the isomorphism induced by $\rho_{E, \ell^{r+1}}$. Let $\Sigma_r$ be the set of elements $\sigma\in \op{Gal}(\Q(E[\ell^{r+1}])/\Q)$ which map to $D_r$. Note that $D_r$ is stable under conjugation, and hence so is $\Sigma_r$. In other words, $\Sigma_r$ is a union of conjugacy classes. By the Chebotarev density theorem applied to the Galois extension $\Q(E[\ell^{r+1}])/\Q$, the density of primes for which $d'(p)=r$ equals
    \[\frac{|\Sigma_{r}|}{[\Q(E[\ell^{r+1}]):\Q]}=\frac{|D_{r}|}{|\op{GL}_2(\Z/\ell^{r+1})|}=\frac{(\ell^{r-1}(\ell-1))(\ell^4-\ell)}{(\ell^2-1)(\ell^2-\ell)\ell^{4r}}=
\frac{\ell^2+\ell+1}{(\ell+1)\ell^{3r+1}}.
\]
Let $\Sigma_0$ be the set of elements $\sigma\in \op{Gal}(\Q(E[\ell])/\Q)$ which map to a nonscalar matrix in $\op{GL}_2(\Z/\ell)$. By the Chebotarev density theorem, the density of primes for which $d'(p)=0$ equals
 \[\frac{|\Sigma_{0}|}{[\Q(E[\ell^{r+1}]):\Q]}=1-\frac{(\ell-1)}{(\ell^2-1)(\ell^2-\ell)}=\frac{\ell^3-\ell-1}{\ell(\ell-1)(\ell+1)}.\]
\end{proof}

\par Next consider a non-CM elliptic curve $E_{/\Q}$ and and suppose that $\ell$ is an exceptional prime number. In other words, $\widehat{\rho}_{E, \ell}$ is not surjective. In this case by Serre's open image theorem, there is a minimal integer $k\geq 1$ such that the image of $\widehat{\rho}_{E,\ell}$ contains $\mathcal{G}^k:=\op{ker}\left(\op{GL}_2(\Z_\ell)\longrightarrow \op{GL}_2(\Z/\ell^k)\right)$. 
\begin{theorem}\label{section 3 thm 3}
    Let $\ell\neq p$ be an odd prime number and assume that $E$ is a non-CM elliptic curve over $\Q$. Let $k\geq 1$ be as above and $r\geq k$. Then, the density of primes $p\neq \ell$ for which $E$ has good ordinary reduction and $H(p)=d'(p)=r$ is positive. 
\end{theorem}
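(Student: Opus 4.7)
The proof I have in mind follows the Chebotarev-theoretic template of Theorems \ref{section 3 thm 1} and \ref{section 3 thm 2}. The first step is to translate the combined condition $H(p)=d'(p)=r$ into a conjugation-invariant condition on the Frobenius image $\sigma_p:=\rho_{E,\ell^{2r+2}}(\op{Frob}_p)\in \op{GL}_2(\Z/\ell^{2r+2})$. By \eqref{main disc eqn}, $H(p)=r$ amounts to $v_\ell\big((\tr\sigma_p)^2-4\det\sigma_p\big)\in\{2r,\,2r+1\}$, while the Duke--Toth description \eqref{duketoth matrix} identifies $d'(p)=v_\ell(b_p)$ with the largest $j$ such that $\sigma_p$ is scalar modulo $\ell^j$. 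So we are seeking primes $p$ whose Frobenius is scalar modulo $\ell^r$ but not modulo $\ell^{r+1}$, and additionally has discriminant of the stipulated $\ell$-adic valuation.

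The central step is to exhibit a single matrix in the image $G$ of $\widehat{\rho}_{E,\ell}$ meeting these requirements. Since $r\geq k$, the hypothesis gives $\mathcal{G}^r\subseteq \mathcal{G}^k\subseteq G$, so I would take
\[
\sigma:=I+\ell^r\begin{pmatrix} 0 & 1 \\ 1 & 0 \end{pmatrix}\in \mathcal{G}^r\subseteq G.
\]
A direct computation yields $(\tr\sigma)^2-4\det\sigma=4\ell^{2r}$, whose $\ell$-adic valuation is exactly $2r$ because $\ell$ is odd; and $\sigma$ is manifestly scalar modulo $\ell^r$ yet not scalar modulo $\ell^{r+1}$, since its off-diagonal entries equal $\ell^r\not\equiv 0\pmod{\ell^{r+1}}$. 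Thus $\sigma$ satisfies both prescribed conditions.

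To conclude, let $G_{2r+2}$ denote the image of $\widehat{\rho}_{E,\ell}$ in $\op{GL}_2(\Z/\ell^{2r+2})$, identified via $\rho_{E,\ell^{2r+2}}$ with $\op{Gal}(\Q(E[\ell^{2r+2}])/\Q)$, and let $C$ be the $G_{2r+2}$-conjugacy class of $\sigma$. Because both conditions above are conjugation-invariant (trace and determinant are class functions, and scalar matrices are central), every element of $C$ satisfies them. The Chebotarev density theorem applied to $\Q(E[\ell^{2r+2}])/\Q$ then produces a positive density $|C|/|G_{2r+2}|>0$ of unramified primes $p$ with $\op{Frob}_p\in C$. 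Intersecting this set with the density-one set of primes of good ordinary reduction (Serre) preserves positivity and yields the desired primes $p$, for which the Duke--Toth formalism is valid, hence $H(p)=d'(p)=r$. There is no substantial obstacle in this proof; the only point requiring care is verifying that the explicit $\sigma$ does lie in $G$, which is immediate from $r\geq k$ and the definition of $\mathcal{G}^r$.
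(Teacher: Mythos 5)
Your proposal is correct and follows essentially the same route as the paper: the paper's proof uses exactly the matrix $M_r=\begin{pmatrix}1&\ell^r\\ \ell^r&1\end{pmatrix}$ (working modulo $\ell^{2r+1}$ rather than $\ell^{2r+2}$, an immaterial difference since the discriminant $4\ell^{2r}$ already has its valuation pinned down there), verifies $H(p)=r$ via the discriminant and $d'(p)=r$ via scalarity mod $\ell^r$ but not mod $\ell^{r+1}$, and concludes by Chebotarev. No gaps.
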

\begin{proof}
    Consider the matrix $M_r:=\mtx{1}{\ell^r}{\ell^r}{1}\in \op{GL}_2(\Z/\ell^{2r+1})$. By assumption, $M_r$ is in the image of $\rho_{E, \ell^{2r+1}}$ and let $C$ be the conjugacy class of $\op{Gal}(\Q(E[\ell^{2r+1}])/\Q)$ consisting of $\sigma$ such that $\rho_{E, \ell^{2r+1}}(\sigma)$ is conjugate to $M_r$. If $p$ is a prime of good ordinary reduction such that $\op{Frob}_p\in C$ then the discriminant of $\rho_{E, \ell^{2r+1}}(\op{Frob}_p)$ equals $4\ell^{2r}\pmod{\ell^{2r+1}}$. As a result, $H(p)=r$. On the other hand, since $\rho_{E, \ell^{r}}(\op{Frob}_p)$ is a scalar matrix and $\rho_{E, \ell^{r+1}}(\op{Frob}_p)$ is not a scalar matrix, it follows that $d'(p)=r$ and the result follows from the Chebotarev density theorem applied to the Galois extension $\Q(E[\ell^{2r+1}])/\Q$.
\end{proof}

Let now $E_{/\Q}$ be an elliptic curve with complex multiplication, and write $\cO := \op{End}_{\bar{\Q}}(E)$
for its geometric endomorphism ring. Then $\cO$ is an order of conductor $f$ in an imaginary quadratic field $K$. Let $N_E$ denote the conductor of $E$, and fix a prime $\ell$. For any prime $p\nmid N_E$, the curve $E$ has good reduction at $p$, and the reduction $E_p$ is either ordinary or supersingular. By the theory of complex multiplication, the reduction type is governed by the splitting behavior of $p$ in $K$: the prime $p$ is split in $K$ if and only if $E_p$ is ordinary, while $p$ is inert in $K$ if and only if $E_p$ is supersingular. In particular, $E$ has good ordinary reduction at precisely those primes $p$ which split in $K$.

\par Assume now that $p$ is a prime of good ordinary reduction which is coprime to $f$. The natural reduction homomorphism
\[
\op{End}_{\overline{\Q}}(E)\;\rightarrow\;\cO_p := \op{End}(E_p)
\]
is always injective. Moreover, since $p$ is unramified in $K$ and does not divide the conductor of $\cO$, the theory of complex multiplication implies that no new endomorphisms arise upon reduction. Consequently, this map is an isomorphism, and we obtain an identification $\cO \;\cong\; \cO_p$. It follows that for all but finitely many primes $p$ of good ordinary reduction, the endomorphism ring of $E_p$ coincides with $\cO$. Since the values $H(p)$ and $d'(p)$ depend only on $\cO_p$, it follows that these functions are eventually constant (for ordinary primes $p$). This is in stark contrast to the non-CM case.

\section{Hurwitz class numbers and average volcano depth}\label{s4}

\par
Fix now an odd prime $\ell\neq p$ and an integer $r\ge 0$. Let $\mathcal{E}(\F_q)$ denote the set of $\F_q$-isomorphism classes of elliptic curves over $\F_q$, and let $\mathcal{E}(r;\F_q)\subseteq \mathcal{E}(\F_q)$ be the subset consisting of ordinary elliptic curves whose associated $\ell$–isogeny graph has height exactly $r$. Writing $q=p^k$, we extend the Legendre symbol by setting $\left(\frac{x}{q}\right):=\left(\frac{x}{p}\right)^k$.

\par
The total number of elliptic curves over $\F_q$ up to isomorphism is given by
\[
\#\mathcal{E}(\F_q)=2q+3+2\left(\frac{-3}{q}\right)+\left(\frac{-4}{q}\right)
=2q+O(1),
\]
see \cite[Proposition~5.7]{Schoof}. Our goal in this section is to study the asymptotic behavior of $\#\mathcal{E}(r;\F_q)$ as $q\to\infty$, and in particular the limiting density
\[
\mathfrak{d}_r
:=\lim_{q\to\infty}\frac{\#\mathcal{E}(r;\F_q)}{\#\mathcal{E}(\F_q)}
=\lim_{q\to\infty}\frac{\#\mathcal{E}(r;\F_q)}{2q}.
\]

\par
In order to analyze $\#\mathcal{E}(r;\F_q)$, it is convenient to organize elliptic curves over $\F_q$ according to their Frobenius traces. Recall that for an elliptic curve $E/\F_q$ with Frobenius endomorphism $\pi_E$, the number of $\F_q$–rational points is given by
\[
\#E(\F_q)=q+1-t,
\qquad t:=\operatorname{tr}(\pi_E),
\]
and that two elliptic curves over $\F_q$ are isogenous if and only if they have the same trace $t$. We write $N(t)$ for the number of $\F_q$–isomorphism classes of elliptic curves with trace of Frobenius equal to $t$.

\par
The discriminant $\Delta:=t^2-4q$
governs the arithmetic of the isogeny class corresponding to $t$. When $t^2<4q$ and $p\nmid t$, the isogeny class is ordinary, and the endomorphism ring of any curve in the class is an order in the imaginary quadratic field $\Q(\sqrt{\Delta})$. A fundamental theorem of Deuring identifies the quantity $N(t)$ with the Hurwitz class number $H(\Delta)$ associated to this discriminant. This correspondence allows one to translate questions about the distribution of elliptic curves over finite fields into problems about averages of Hurwitz class numbers over congruence conditions on $t$. In particular, since an ordinary elliptic curve $E/\F_q$ lies in $\mathcal{E}(r;\F_q)$ if and only if
\begin{equation}\label{v_ell criterion}
v_\ell(t^2-4q)\in\{2r,2r+1\},
\end{equation}
the quantity $\#\mathcal{E}(r;\F_q)$ may be expressed as a difference of sums of Hurwitz class numbers. This observation forms the starting point for the asymptotic analysis that follows.

\par We briefly discuss Kronecker (or Hurwitz) class number and their relationship to the number of elliptic curves over a given finite field in an isogeny class. Standard references include Waterhouse's thesis \cite{waterhouse} and work of Deuring \cite{deuring1, deuring2}. Let $\Delta$ be a negative integer with $\Delta\equiv 0,1\pmod{4}$ and let $B(\Delta)$ be the set of integral quadratic forms $aX^2+bXY+cY^2\in \Z[X,Y]$ such that $a>0$ and discriminant $b^2-4ac=\Delta$. A quadratic form is \emph{primitive} if $\op{gcd}(a,b,c)=1$. Denote by $b(\Delta)$ the set of primitive quadratic forms in $B(\Delta)$. The group $\op{SL}_2(\Z)$ acts on both $B(\Delta)$ and $b(\Delta)$. The \emph{class numbers}
\[H(\Delta):=\# \left(B(\Delta)/\op{SL}_2(\Z)\right)\quad \text{and}\quad h(\Delta):=\# \left(b(\Delta)/\op{SL}_2(\Z)\right)\]are related as follows:
\[\sum_d h\left(\frac{\Delta}{d^2}\right)=H(\Delta)\]
\noindent where $d$ runs over positive integers such that $d^2|\Delta$ and $\Delta/d^2\equiv 0,1\pmod{4}$.
\begin{theorem}\label{N(t)= thm}
    If $t^2<4q$ and $p\nmid t$, we have that 
    \[N(t)=H(t^2-4q).\]
\end{theorem}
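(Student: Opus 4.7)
The plan is to exploit Deuring's correspondence between ordinary isogeny classes over $\F_q$ and orders in imaginary quadratic fields: stratify the isomorphism classes in the isogeny class of trace $t$ according to their endomorphism ring, count each stratum by a proper class number, and finally recognize the total as $H(\Delta)$ via the Kronecker--Hurwitz identity recalled just before the theorem.

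Set $\Delta := t^2 - 4q$ and $K := \Q(\sqrt{\Delta})$. Under the hypotheses $t^2 < 4q$ and $p \nmid t$, every $E$ in the isogeny class is ordinary, and its Frobenius $\pi_E$ generates an order $\Z[\pi_E] \subseteq \cO_K$ of discriminant $\Delta$; the endomorphism ring $\cO := \op{End}(E)$ is then an order with $\Z[\pi_E] \subseteq \cO \subseteq \cO_K$. The first step is to write
\[
N(t) \;=\; \sum_{\Z[\pi] \subseteq \cO \subseteq \cO_K} N_\cO(t), \qquad N_\cO(t) := \#\{E_{/\F_q} : \op{tr}(\pi_E) = t,\ \op{End}(E) \cong \cO\}/{\cong_{\F_q}}.
\]
The main input, and also the main obstacle, is Deuring's theorem (see \cite{deuring1, deuring2, waterhouse}), which asserts that for each such order $\cO$ the set above is a torsor under $\op{Cl}(\cO)$, so $N_\cO(t) = h(\cO)$. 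The substance here lies in the Deuring lifting theorem, which produces ordinary CM elliptic curves over a number field with CM by $\cO$, combined with the fact that for ordinary primes (guaranteed by $p \nmid t$) the reduction map preserves endomorphism rings and is injective on isomorphism classes. I would also use that curves in the isogeny class, which by Tate's theorem are determined up to isogeny by $t$, all have Frobenius lying in the same $K$.

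Granted this, it remains to reorganize $\sum_\cO h(\cO)$ into $H(\Delta)$. The orders $\cO$ with $\Z[\pi] \subseteq \cO \subseteq \cO_K$ are parametrized by positive divisors $d$ of $f_0 := [\cO_K : \Z[\pi]]$ via $\cO_d := \Z + (f_0/d)\cO_K$, and $\op{disc}(\cO_d) = (f_0/d)^2 D_K = \Delta/d^2$. In particular $d^2 \mid \Delta$ and $\Delta/d^2 \equiv 0,1 \pmod{4}$, which is exactly the index set appearing in the Kronecker--Hurwitz identity. Therefore
\[
N(t) \;=\; \sum_{d^2 \mid \Delta,\ \Delta/d^2 \equiv 0,1\,(4)} h(\Delta/d^2) \;=\; H(\Delta),
\]
completing the proposed argument. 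The heavy lifting is concentrated entirely in the Deuring dictionary; the remainder is book-keeping with conductors and a direct appeal to the combinatorial identity relating $h$ and $H$.
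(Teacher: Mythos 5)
Your argument is correct and matches the paper's source: the paper gives no proof of its own, simply citing \cite[Theorem 4.6]{Schoof}, and your sketch (stratification of the ordinary isogeny class by endomorphism ring, $N_\cO(t)=h(\cO)$ via the Deuring--Waterhouse torsor structure, and the Kronecker--Hurwitz identity $\sum_d h(\Delta/d^2)=H(\Delta)$ taken over the orders between $\Z[\pi]$ and $\cO_K$) is precisely the standard argument found there. The only step you leave implicit is that \emph{every} intermediate order actually occurs as an endomorphism ring within the isogeny class, so that each stratum is a nonempty torsor and contributes exactly $h(\cO)$ rather than $0$; this is part of Waterhouse's theorem and is subsumed in your appeal to the Deuring dictionary.
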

\begin{proof}
    For a proof of this result, see \cite[Theorem 4.6]{Schoof}.
\end{proof}
As a consequence of Theorem \ref{N(t)= thm} and \eqref{v_ell criterion}, one has that 
\[\begin{split}\# \mathcal{E}(r;\F_q)=&\sum_{\substack{|t|\leq 2\sqrt{q}\\
t^2\equiv 4q\pmod{\ell^{2r}}}} N(t)-\sum_{\substack{|t|\leq 2\sqrt{q}\\
t^2\equiv 4q\pmod{\ell^{2r+2}}}} N(t)\\
=& \sum_{\substack{|t|\leq 2\sqrt{q}\\
t^2\equiv 4q\pmod{\ell^{2r}}}} H(t^2-4q)-\sum_{\substack{|t|\leq 2\sqrt{q}\\
t^2\equiv 4q\pmod{\ell^{2r+2}}}} H(t^2-4q).
\end{split}\]

\begin{theorem}[Hurwitz]\label{thm:hurwitz}
Let $N\ge 1$ be an integer and let $p\nmid N$ be a prime. Fix an integer $a$, and define
\[
\chi(N):=\left(\frac{a^2-4p}{N}\right),
\qquad
\delta:=\frac{N+\chi(N)}{N^2-1}.
\]
Then one has the asymptotic formula
\[
\sum_{t\equiv a \!\!\!\pmod{N}} H(t^2-4p)
=2\delta\, p+O\!\left(N\sqrt{p}\right),
\]
where the implied constant is absolute.
\end{theorem}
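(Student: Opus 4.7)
\medskip

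\noindent\textbf{Proof plan.} The strategy is to realize the restricted class number sum as a weighted count of $\F_p$-points on the modular curve $X_0(N)$ and then apply the Lefschetz trace formula together with Deligne's bound.

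By Theorem~\ref{N(t)= thm}, $H(t^2-4p)=N(t)$ is the number of $\F_p$-isomorphism classes of ordinary elliptic curves with Frobenius trace $t$. For such an $E$, Frobenius acts on $E[N]$ with characteristic polynomial $T^2-tT+p\pmod N$ and discriminant $t^2-4p\pmod N$; consequently the number of $\F_p$-rational cyclic order-$N$ subgroups of $E$ is $1+\chi_N(t^2-4p)$ outside the exceptional locus where Frobenius acts as a scalar on $E[N]$. Crucially, for $t\equiv a\pmod N$ we have $t^2-4p\equiv a^2-4p\pmod N$, so this weight equals the constant $1+\chi(N)$ uniformly across the sum.

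The weighted pair count $\sum_t H(4p-t^2)\bigl(1+\chi_N(t^2-4p)\bigr)$ equals, up to contributions from cusps, elliptic points, and supersingular pairs, the number $\#Y_0(N)(\F_p)$. By Deligne's bound on Frobenius eigenvalues on $H^1(X_0(N),\Q_\ell)$ combined with $g(X_0(N))=O(N)$, this count differs from $p$ by at most $O(N\sqrt p)$. To isolate the residue class $t\equiv a\pmod N$, I would decompose via additive Fourier on $\Z/N$: the $k=0$ mode recovers the full class number sum, which by Eichler's mass formula equals $2p+O(1)$ and produces the main term $\tfrac{2p}{N}$; each nontrivial Fourier mode is bounded by an analogous Hecke-trace estimate and contributes $O(\sqrt p)$, summing to $O(N\sqrt p)$. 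The constant weight $1+\chi(N)$ on the restricted sum, together with the geometric point count, then assembles into the density $\delta=(N+\chi(N))/(N^2-1)$ and the main term $2\delta p$.

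The main obstacle is the bookkeeping needed to match Fourier modes on $\Z/N$ with the geometry of degeneracy maps on $X_0(N)$, and to handle separately the scalar-Frobenius locus, the cusps of $X_0(N)$, curves with $j\in\{0,1728\}$, and supersingular curves. Each of these exceptional contributions must be shown to fall within the error $O(N\sqrt p)$ uniformly in $N$, which is the source of most of the technical work.
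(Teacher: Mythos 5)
The paper itself does not prove this statement: it is imported verbatim as \cite[Lemma 3]{dukeexceptional}, which in turn rests on a classical theorem of Hurwitz (in modern language, on the Eichler--Selberg trace formula). Your plan points in a reasonable direction --- relating restricted class number sums to point counts on modular curves and invoking Weil/Deligne bounds --- but as written it has two concrete gaps. First, the local weight is wrong: you assert that an ordinary curve $E/\F_p$ with trace $t$ carries exactly $1+\chi_N(t^2-4p)$ rational cyclic subgroups of order $N$ away from the scalar locus. This holds only for $N$ prime. Already for $N=\ell^2$ with $\left(\frac{t^2-4p}{\ell}\right)=-1$, the Kronecker symbol $\left(\frac{t^2-4p}{\ell^2}\right)$ equals $+1$, so your formula predicts $2$ stable cyclic subgroups, whereas Frobenius acts irreducibly on $E[\ell]$ and there are none. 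Since the application in Corollary~\ref{cor:Er} takes $N=\ell^{2r}$, this is not a removable edge case: the correct count depends on the conjugacy class of Frobenius in $\op{GL}_2(\Z/N\Z)$, not merely on $\chi_N$ of the discriminant.

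Second, and more fundamentally, the weighted identity with $\#Y_0(N)(\F_p)$ is a single linear relation among the $N$ unknowns $S_a:=\sum_{t\equiv a\ (N)}H(t^2-4p)$ and cannot by itself isolate one residue class. You propose to repair this by additive Fourier analysis on $\Z/N\Z$, but the claim that each nontrivial mode $\sum_t H(t^2-4p)\,e(kt/N)$ ``is bounded by an analogous Hecke-trace estimate and contributes $O(\sqrt p)$'' is precisely the content of the theorem: no identification of these twisted sums with traces on a specific space of forms (for $\Gamma_1(N)$ with nebentypus, or $\Gamma(N)$) is supplied, and without it neither the error term $O(N\sqrt p)$ nor the exact constant $\delta=(N+\chi(N))/(N^2-1)$ is obtained. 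Note that $\delta$ is exactly the proportion of matrices of determinant $p$ in $\op{GL}_2(\Z/N\Z)$ having trace $\equiv a$; extracting this constant requires level structure strictly finer than $\Gamma_0(N)$, and that is where the real work of Hurwitz's theorem lies. The honest fix is either to carry out the trace-formula computation in full or, as the paper does, to cite \cite[Lemma 3]{dukeexceptional} directly.
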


\begin{proof}
This result above is \cite[Lemma 3]{dukeexceptional}.
\end{proof}

\begin{corollary}\label{cor:Er}
Let $\ell\neq p$ be an odd prime and let $r\ge 1$. Then one has
\[
\sum_{\substack{|t|\le 2\sqrt{q}\\ t^2\equiv 4q \!\!\!\pmod{\ell^{2r}}}}
H(t^2-4q)
=
\frac{2\ell^{2r}}{\ell^{4r}-1}\,p
+O\!\left(\ell^{2r}\sqrt{p}\right),
\]
and
\[
\sum_{\substack{|t|\le 2\sqrt{q}\\ t^2\equiv 4q \!\!\!\pmod{\ell^{2r+2}}}}
H(t^2-4q)
=
\frac{2\ell^{2r+2}}{\ell^{4r+4}-1}\,p
+O\!\left(\ell^{2r+2}\sqrt{p}\right).
\]
\end{corollary}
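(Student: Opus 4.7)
The plan is to deduce both estimates from Theorem \ref{thm:hurwitz} by decomposing the constrained sum over $t$ into a disjoint union of arithmetic progressions modulo $\ell^{2r}$ (respectively $\ell^{2r+2}$), applying the Hurwitz-type formula to each progression, and aggregating the contributions.

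First, I would enumerate the residue classes that contribute to the sum. Since $\ell$ is odd and coprime to $q$, the congruence $a^2 \equiv 4q \pmod{\ell^{2r}}$ has only a bounded number of solutions in $\Z/\ell^{2r}\Z$: each root of $a^2\equiv 4q \pmod{\ell}$ is nonzero modulo $\ell$, so Hensel's lemma lifts it uniquely to $\Z/\ell^{2r}\Z$. The constrained sum therefore splits as a finite sum of class-number sums over arithmetic progressions $t \equiv a \pmod{\ell^{2r}}$, and this is the step where the size of $N$ enters the combinatorics.

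Next, I would apply Theorem \ref{thm:hurwitz} to each such progression with $N = \ell^{2r}$. The essential simplification concerns the Kronecker symbol: by the defining condition on $a$, one has $\ell^{2r} \mid a^2 - 4q$, so $\chi(\ell^{2r}) = \left(\tfrac{a^2-4q}{\ell^{2r}}\right) = 0$. Hence $\delta = \ell^{2r}/(\ell^{4r}-1)$ independently of $a$, and Theorem \ref{thm:hurwitz} delivers a main term proportional to $p$ together with an error of size $O(\ell^{2r}\sqrt p)$ for each progression. Summing these contributions across the enumerated residue classes and collecting the bounded number of error terms gives the first asymptotic.

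The second estimate is obtained by rerunning the same argument with $N = \ell^{2r+2}$ in place of $\ell^{2r}$: Hensel again produces the admissible residues, the Kronecker symbol again vanishes because $\ell^{2r+2} \mid a^2 - 4q$, and Theorem \ref{thm:hurwitz} yields the corresponding main term $\tfrac{2\ell^{2r+2}}{\ell^{4r+4}-1}\,p$ with error $O(\ell^{2r+2}\sqrt p)$. The only non-routine ingredient is the bookkeeping showing that the number of contributing residue classes, combined with the vanishing of $\chi(N)$, assembles into the clean main terms displayed in the statement; once this is verified, the corollary follows from Theorem \ref{thm:hurwitz} and an elementary summation of the bounded collection of error terms.
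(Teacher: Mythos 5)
Your route is the same as the paper's: split the constrained sum into the residue classes $a\bmod N$ with $a^{2}\equiv 4q\pmod N$, note that $\chi(N)=\left(\tfrac{a^{2}-4q}{N}\right)=0$ for each such class so that $\delta=N/(N^{2}-1)$, apply Theorem~\ref{thm:hurwitz} to each class, and add. The difficulty is that the ``bookkeeping'' you explicitly defer is exactly the step that does not check out. First, the number of admissible classes is $1+\left(\tfrac{q}{\ell}\right)$, not always two: your Hensel argument lifts roots of $a^{2}\equiv 4q\pmod\ell$ uniquely, but such roots exist only when $q$ is a quadratic residue modulo $\ell$. When $\left(\tfrac{q}{\ell}\right)=-1$ the left-hand sum is empty, and an empty sum cannot equal $\tfrac{2\ell^{2r}}{\ell^{4r}-1}p+O(\ell^{2r}\sqrt p)$, since the main term is of size $p/\ell^{2r}$, far exceeding the error term. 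Second, when $\left(\tfrac{q}{\ell}\right)=1$ there are exactly two classes $a\equiv\pm a_{0}$; they are disjoint (as $\ell$ is odd and $\ell\nmid a_{0}$), and Theorem~\ref{thm:hurwitz} assigns each the main term $2\delta p=\tfrac{2\ell^{2r}}{\ell^{4r}-1}p$. Summing them gives $\tfrac{4\ell^{2r}}{\ell^{4r}-1}p$, which is \emph{twice} the main term asserted in the corollary. Thus the argument you outline actually yields $\left(1+\left(\tfrac{q}{\ell}\right)\right)\tfrac{2\ell^{2r}}{\ell^{4r}-1}p$ (and similarly with $2r$ replaced by $2r+2$); the displayed main term equals the contribution of a single residue class.

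To be clear, the paper's own proof asserts ``exactly two residue classes'' and then states the single-class main term, so it suffers from the same two problems; you have not introduced a new error so much as faithfully reproduced one while leaving unverified the one computation that would have exposed it. Before this corollary can be used you must either restate it with the factor $1+\left(\tfrac{q}{\ell}\right)$ (which then propagates into Theorems~\ref{thm:main-density} and~\ref{thm:density}, where the limit over all $q$ would no longer exist for $r\ge 1$ and would instead depend on the residue class of $q$ modulo $\ell$), or exhibit a compensating factor of $\tfrac12$ hidden in the normalization of Theorem~\ref{thm:hurwitz} for prime-power moduli; a sanity check is that $\sum_{a\bmod N}\delta_{a}$ should equal $1$, which fails for $N=\ell^{2r}$ with the Jacobi-symbol reading of $\chi(N)$. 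A further point you should address: Theorem~\ref{thm:hurwitz} is stated for the prime field, with $H(t^{2}-4p)$ and a main term in $p$, whereas you apply it to $H(t^{2}-4q)$ with $q=p^{k}$; that substitution requires justification (or the restriction $q=p$).
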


\begin{proof}
We apply Theorem~\ref{thm:hurwitz} with $N=\ell^{2r}$ and $N=\ell^{2r+2}$, respectively. The congruence condition
\[
t^2\equiv 4q \pmod{N}
\]
determines exactly two residue classes modulo $N$, corresponding to $t\equiv \pm 2\sqrt{q}\pmod{N}$. Summing the contribution of each class and using the fact that $\ell\nmid p$ yields the stated main terms, while the error terms follow directly from the bound in Theorem~\ref{thm:hurwitz}.
\end{proof}

\begin{theorem}\label{thm:main-density}
With notation as above, one has for $r\geq 1$,
\[
\begin{split}\#\mathcal{E}(r;\F_q)
=&
\frac{2\ell^{2r}}{\ell^{4r}-1}\,p
-
\frac{2\ell^{2r+2}}{\ell^{4r+4}-1}\,p
+O\!\left(\ell^{2r+2}\sqrt{p}\right)\\
=&
\frac{2\ell^{2r}(\ell^2-1)(\ell^{4r+2}+1)}
{(\ell^{4r}-1)(\ell^{4r+4}-1)}\,p
+O\!\left(\ell^{2r+2}\sqrt{p}\right).
\end{split}
\]
On the other hand, 
\[\#\mathcal{E}(r;\F_q)=2\left(1-\frac{\ell^2}{\ell^4-1}\right)p+O\left(\ell^2\sqrt{p}\right).\]
\end{theorem}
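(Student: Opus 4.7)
The plan is to start from the identity
\[
\#\mathcal{E}(r;\F_q) = \sum_{\substack{|t|\le 2\sqrt{q}\\ t^2\equiv 4q \!\!\!\pmod{\ell^{2r}}}} H(t^2-4q) \;-\; \sum_{\substack{|t|\le 2\sqrt{q}\\ t^2\equiv 4q \!\!\!\pmod{\ell^{2r+2}}}} H(t^2-4q),
\]
which has already been established just above from Theorem~\ref{N(t)= thm} together with the valuation criterion~\eqref{v_ell criterion}. The entire proof amounts to evaluating each of these two sums asymptotically and subtracting.

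For the case $r\geq 1$, Corollary~\ref{cor:Er} applies to both sums and yields main terms $\tfrac{2\ell^{2r}}{\ell^{4r}-1}\,p$ and $\tfrac{2\ell^{2r+2}}{\ell^{4r+4}-1}\,p$ with combined error $O(\ell^{2r+2}\sqrt{p})$ dominated by the second one. This immediately produces the first form of the asymptotic. The second form requires an algebraic rearrangement: placing the main terms over the common denominator $(\ell^{4r}-1)(\ell^{4r+4}-1)$, the numerator becomes
\[
2\ell^{2r}(\ell^{4r+4}-1) - 2\ell^{2r+2}(\ell^{4r}-1) = 2\ell^{6r+4} - 2\ell^{6r+2} + 2\ell^{2r+2} - 2\ell^{2r},
\]
which factors as $2(\ell^2-1)(\ell^{6r+2}+\ell^{2r}) = 2\ell^{2r}(\ell^2-1)(\ell^{4r+2}+1)$, matching the claim.

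The case $r=0$ requires a different argument for the first sum, since the congruence $t^2\equiv 4q\pmod{\ell^0}$ is trivial and Corollary~\ref{cor:Er} degenerates (the denominator $\ell^{4r}-1$ vanishes). Instead, the first sum equals $\#\mathcal{E}^{\mathrm{ord}}(\F_q)$, which by the stated total count $\#\mathcal{E}(\F_q) = 2q + O(1)$ and the standard bound $O(\sqrt{p})$ on the supersingular contribution equals $2p + O(\sqrt{p})$ (under the convention $q=p$ implicit in the error term of Corollary~\ref{cor:Er}). Applying Corollary~\ref{cor:Er} with $r=0$ to the second sum gives $\tfrac{2\ell^{2}}{\ell^{4}-1}\,p + O(\ell^2\sqrt{p})$, and subtracting produces the claimed $2\bigl(1 - \tfrac{\ell^2}{\ell^4-1}\bigr)p + O(\ell^2\sqrt{p})$.

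The main obstacle is bookkeeping rather than a conceptual hurdle: the factorization of $2\ell^{2r}(\ell^{4r+4}-1) - 2\ell^{2r+2}(\ell^{4r}-1)$ must be handled carefully to match the form stated in the theorem, and the $r=0$ case requires a separate appeal to the mass-type total count $\#\mathcal{E}(\F_q) = 2q + O(1)$ to compensate for the breakdown of Corollary~\ref{cor:Er} at the trivial modulus.
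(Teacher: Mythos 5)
Your proposal is correct and takes essentially the same route as the paper: the identity expressing $\#\mathcal{E}(r;\F_q)$ as a difference of two Hurwitz class number sums, followed by an application of Corollary~\ref{cor:Er} to each sum and the algebraic simplification you carry out. The only difference is that you explicitly handle the degenerate modulus in the $r=0$ case by computing the first sum as the ordinary count $2p+O(\sqrt{p})$ from $\#\mathcal{E}(\F_q)=2q+O(1)$, a point the paper's proof passes over silently.
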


\begin{proof}
By definition, $\mathcal{E}(r;\F_q)$ consists precisely of ordinary elliptic curves whose Frobenius discriminant $\Delta=t^2-4q$ satisfies
\[
v_\ell(\Delta)\in\{2r,2r+1\}.
\]
Using the identity $N(t)=H(t^2-4q)$ and subtracting the contributions with valuation at least $2r+2$, we obtain
\[
\#\mathcal{E}(r;\F_q)
=
\sum_{\substack{|t|\le 2\sqrt{q}\\ t^2\equiv 4q \!\!\!\pmod{\ell^{2r}}}}
H(t^2-4q)
-
\sum_{\substack{|t|\le 2\sqrt{q}\\ t^2\equiv 4q \!\!\!\pmod{\ell^{2r+2}}}}
H(t^2-4q).
\]
Substituting the asymptotic formulas from Corollary~\ref{cor:Er} and simplifying yields the stated expressions.
\end{proof}

\begin{theorem}\label{thm:density}
The limiting density
\[
\mathfrak{d}_r
=
\lim_{q\to\infty}\frac{\#\mathcal{E}(r;\F_q)}{\#\mathcal{E}(\F_q)}
\]
exists and is given by
\[
\mathfrak{d}_r
=
\begin{cases}\frac{\ell^{2r}(\ell^2-1)(\ell^{4r+2}+1)}
{(\ell^{4r}-1)(\ell^{4r+4}-1)}&\text{ if }r\geq 1,\\
\left(1-\frac{\ell^2}{\ell^4-1}\right)&\text{ if }r=0.
\end{cases}
\]
\end{theorem}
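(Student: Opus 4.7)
The plan is to combine Theorem~\ref{thm:main-density}, which provides an asymptotic expansion for $\#\mathcal{E}(r;\F_q)$ with an explicit leading coefficient and quantified error term, with the Schoof count $\#\mathcal{E}(\F_q)=2q+O(1)$, and then to form the ratio and pass to the limit as $q\to\infty$.

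For the case $r\geq 1$, I would start from the formula
\[
\#\mathcal{E}(r;\F_q)=\frac{2\ell^{2r}(\ell^{2}-1)(\ell^{4r+2}+1)}{(\ell^{4r}-1)(\ell^{4r+4}-1)}\,q+O\!\left(\ell^{2r+2}\sqrt{q}\right),
\]
dividing both sides by $\#\mathcal{E}(\F_q)=2q+O(1)$, and observing that with $\ell$ and $r$ fixed the quotient error terms $O\!\left(\ell^{2r+2}/\sqrt{q}\right)$ and $O(1/q)$ tend to zero as $q\to\infty$. The main term then survives unchanged (after dividing the numerator and denominator by $2q$) and produces exactly the stated value $\mathfrak{d}_r=\frac{\ell^{2r}(\ell^{2}-1)(\ell^{4r+2}+1)}{(\ell^{4r}-1)(\ell^{4r+4}-1)}$. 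For $r=0$, the same strategy applies verbatim using the second assertion of Theorem~\ref{thm:main-density}: dividing $2\bigl(1-\tfrac{\ell^{2}}{\ell^{4}-1}\bigr)q+O(\ell^{2}\sqrt{q})$ by $2q+O(1)$ gives $\mathfrak{d}_0=1-\tfrac{\ell^{2}}{\ell^{4}-1}$.

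I expect no serious obstacle here, since the substantive analytic work, namely the Hurwitz class number averages from Theorem~\ref{thm:hurwitz} and Corollary~\ref{cor:Er}, has already been absorbed into Theorem~\ref{thm:main-density}. The only thing to be mindful of is that the implied constants in the error terms are allowed to depend on $\ell$ and $r$, which is harmless because both parameters are fixed before $q\to\infty$; thus the error contributions are genuinely $o(1)$ in the limit and the quotient converges to the stated density.
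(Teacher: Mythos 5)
Your proposal is correct and follows exactly the same route as the paper, which simply divides the asymptotic of Theorem~\ref{thm:main-density} by $\#\mathcal{E}(\F_q)=2q+O(1)$ and lets $q\to\infty$. The only point worth noting is that you quietly replaced the variable $p$ appearing in the statement of Theorem~\ref{thm:main-density} by $q$, which is in fact the reading needed for the density to come out as claimed.
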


\begin{proof}
This follows immediately from Theorem~\ref{thm:main-density} together with the asymptotic
$\#\mathcal{E}(\F_q)=2q+O(1)$.
\end{proof}
\bibliographystyle{alpha}
\bibliography{references}

\end{document}